\DeclareMathOperator{\Sym}{Sym}
\DeclareMathOperator{\Hilb}{Hilb}
\newcommand{\hilb}[1]{^{[#1]}}
\newcommand{\ie}{{\it i.e. }}
\newcommand{\vac}{|0\rangle}
\newcommand{\One}{1}
\newcommand{\coloneqq}{:=}
\newcommand{\IC}{\mathbb{C}}
\newcommand{\IR}{\mathbb{R}}
\newcommand{\IQ}{\mathbb{Q}}
\newcommand{\IZ}{\mathbb{Z}}
\newcommand{\km}{\mathfrak{m}}
\newcommand{\kq}{\mathfrak{q}}
\theoremstyle{plain}
\newtheorem{theorem}{Theorem}[section]
\newtheorem{lemma}[theorem]{Lemma}
\newtheorem{proposition}[theorem]{Proposition}
\newtheorem{corollary}[theorem]{Corollary}
\theoremstyle{definition}
\newtheorem{definition}[theorem]{Definition}
\newtheorem{notation}[theorem]{Notation}
\theoremstyle{remark}
\newtheorem{remark}[theorem]{Remark}
\newtheorem{example}[theorem]{Example}
\begin{document}

\title[Products in $H^\ast(\Hilb^n(K3), \IZ)$]{Computing Cup-Products in integral cohomology of Hilbert schemes of points on K3 surfaces}

\author{Simon Kapfer}
\address{Simon Kapfer, Laboratoire de Math\'ematiques et Applications, UMR CNRS 6086, Universit\'e de Poitiers, T\'el\'eport 2, Boulevard Marie et Pierre Curie, F-86962 Futuroscope Chasseneuil}
\email{simon.kapfer@math.univ-poitiers.fr}

\date{\today}


\begin{abstract} 

We study cup 
products in the integral cohomology of the Hilbert scheme of $n$ points on a K3 surface and present a computer program for this purpose. In particular, we deal with the question, which classes can be represented by products of lower degrees.
\end{abstract}

\maketitle


The Hilbert schemes of $n$ points on a complex surface parametrize all zero-dimensional subschemes of length $n$. Studying their rational cohomology, Nakajima \cite{Nakajima} was able to give an explicit description of the vector space structure in terms of the action of a Heisenberg algebra.
The Hilbert schemes of points on a K3 surface are one of the few known classes of Irreducible Holomorphic Symplectic Manifolds. Lehn and Sorger \cite{LehnSorger} developed an algebraic model to describe the cohomological ring structure. On the other hand, Qin and Wang \cite{QinWang} found a base for integral cohomology in the projective case. By combining these results, we are able to compute all cup-products in the cohomology rings of Hilbert schemes of $n$ points on a projective K3 surface with integral coefficients. 
For $n=2$, this was done by Boissi\`ere, Nieper-Wi{\ss}kirchen and Sarti \cite{BNS}, who applied their results to automorphism groups of prime order. When $n$ is increasing, the ranks of the cohomology rings become very large, so we need the help of a computer. The source code is available under \url{https://github.com/s--kapfer/HilbK3}

Our goal here is to give some properties for low degrees.
Denote by $S\hilb{3}$ the Hilbert scheme of 3 points on a projective K3 surface (or a deformation equivalent space). We identify $\Sym^kH^2(S\hilb{n},\IZ)$ with its image in $H^{2k}(S\hilb{n},\IZ)$ under the cup product mapping. 
\begin{theorem}
The cup product mappings for the Hilbert scheme of 3 points on a projective K3 surface have the following cokernels:
\begin{gather}
\frac{H^4(S\hilb{3},\IZ)}{\Sym^2 H^2(S\hilb{3},\IZ)}  \cong \frac{\IZ}{3\IZ} \oplus \IZ^ {\oplus 23}\\
\frac{H^6(S\hilb{3},\IZ)}{H^2(S\hilb{3},\IZ)\smile H^4(S\hilb{3},\IZ)} \cong \left(\frac{\IZ}{3\IZ}\right)^{\oplus 23}
\end{gather}
\end{theorem}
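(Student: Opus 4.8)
The plan is to prove both isomorphisms by an explicit finite computation, using the two structural inputs recalled in the introduction: the Qin--Wang integral basis of $H^\ast(S\hilb{3},\IZ)$ and the Lehn--Sorger model for the rational cup product. The first step is to fix, in each relevant degree, an explicit $\IZ$-basis of $H^{2k}(S\hilb{3},\IZ)$ for $k=1,2,3$ coming from Qin--Wang. Their ranks $b_2$, $b_4$, $b_6$ are read off from the Nakajima/G\"ottsche description of the additive structure; in particular $H^2(S\hilb{3},\IZ)\cong H^2(S,\IZ)\oplus\IZ\delta$ has rank $23$, so $\Sym^2 H^2$ has rank $276$.

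Next I would make the cup product explicit. The Lehn--Sorger model presents $H^\ast(S\hilb{3},\IQ)$ as a graded Frobenius algebra built from $H^\ast(S)$ and the symmetric group $\kS_3$, in which every product of two classes can be evaluated combinatorially. Feeding in the Qin--Wang basis vectors (expressed in the Nakajima/Heisenberg generators) and multiplying them in the Lehn--Sorger ring yields, after re-expanding the result back in the integral basis, the matrices of the two maps
\begin{equation*}
\Sym^2 H^2(S\hilb{3},\IZ)\lra H^4(S\hilb{3},\IZ),\qquad H^2(S\hilb{3},\IZ)\otimes H^4(S\hilb{3},\IZ)\lra H^6(S\hilb{3},\IZ).
\end{equation*}
Both matrices have integer entries once the change of basis is carried out correctly.

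The cokernels are then obtained from the Smith normal forms of these two integer matrices: the elementary divisors equal to $1$ account for the rank of the image, the vanishing divisors give the free part, and any divisor $>1$ contributes torsion. I expect the first map to have image of rank $b_4-23$ with exactly one elementary divisor equal to $3$ and all other nontrivial ones equal to $1$, producing $\IZ/3\IZ\oplus\IZ^{\oplus 23}$; and the second map to be rationally surjective (so $\rank$ of the image equals $b_6$) with exactly $23$ elementary divisors equal to $3$, producing $(\IZ/3\IZ)^{\oplus 23}$. The rational ranks of the images, which fix the free parts ($23$ and $0$), can already be predicted from the Lehn--Sorger model alone.

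The main obstacle is integrality bookkeeping. The Lehn--Sorger product is defined over $\IQ$ and naturally produces classes with denominators, whereas the Qin--Wang basis is adapted to the integral lattice; the entire content of the theorem lies in controlling these denominators precisely. The free ranks are visible rationally, but the $3$-torsion is a genuinely integral phenomenon: it must be traced to the factors of $3\mid 3!$ that enter when passing between the Heisenberg/Nakajima normalisation and the geometric integral generators for $n=3$. Verifying that only the prime $3$ survives, and with multiplicity exactly $23$ (matching $b_2$), is the delicate point, and is precisely where the computer calculation underlying the paper becomes essential.
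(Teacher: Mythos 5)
Your proposal follows essentially the same route as the paper: the paper likewise expresses the Qin--Wang integral basis through Nakajima operators, multiplies in the Lehn--Sorger model of $A\hilb{n}$, and reads off the cokernels of the resulting integer multiplication matrices via Smith normal form, all carried out by the computer program (this is exactly how the two propositions of Section~\ref{CompSection} establishing the theorem are obtained, with the rational ranks controlled by Verbitsky's injectivity and the stability results just as you predict). Your expected elementary divisors also match the paper's findings, including the single $\IZ/3\IZ$ in degree $4$ generated by $1^{(3)}$ and the $23$ factors of $\IZ/3\IZ$ in degree $6$.
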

Although the case $n=3$ is the most interesting for us, our computer program allows computations for arbitrary $n$. We give some numerical results in Section \ref{CompSection}.\vspace{5pt}

\emph{Acknowledgements.} The author thanks Samuel~Boissi\`ere and Marc~Nieper-Wi{\ss}kirchen for their supervision and many helpful comments. He also thanks Gr\'{e}goire~Menet for stimulating proposals and the Laboratoire de Math\'{e}matiques et Applications of the university of Poitiers for its hospitality. This work was partially supported by a DAAD grant.
\section{Preliminaries}
\begin{definition}
Let $n$ be a natural number. A partition of $n$ is a decreasing sequence $\lambda = (\lambda_1,\ldots,\lambda_k),\ \lambda_1\geq\ldots\geq\lambda_k>0$ of natural numbers such that $\sum_i \lambda_i =n$. Sometimes it is convenient to write $\lambda = (\ldots,2^{m_2},1^{m_1})$ with multiplicities in the exponent. No confusion should be possible since numerical exponentiation is never meant in this context. We define the weight $\|\lambda\| :=\sum m_i i =n$ and the length $|\lambda| := \sum_i m_i =k$. We also define $z_\lambda \coloneqq\prod_i i^{m_i} m_i!$. 
\end{definition}
\begin{definition} \label{SymFun}
Let $\Lambda_n := \IQ[x_1,\ldots,x_n]^{S_n}$ be the graded ring of symmetric polynomials. There are canonical projections $: \Lambda_{n+1}\rightarrow\Lambda_n$ which send $x_{n+1}$ to zero. The graded projective limit
$\Lambda:=\lim\limits_{\leftarrow}\Lambda_n$ is called the ring of symmetric functions.
Let $m_\lambda$ and $p_\lambda$ denote the monomial and the power sum symmetric functions. They are defined as follows: For a monomial $x_{i_1}^{\lambda_1}x_{i_2}^{\lambda_2}\ldots x_{i_k}^{\lambda_k}$ of total degree $n$, the (ordered) sequence of exponents $(\lambda_1,\ldots,\lambda_k)$ defines a partition $\lambda$ of $n$, which is called the shape of the monomial. Then we define $m_\lambda$ being the sum of all monomials of shape $\lambda$. For the power sums, first define $p_n := x_1^n + x_2^n + \ldots$. Then $p_\lambda := p_{\lambda_1}p_{\lambda_2}\ldots p_{\lambda_k}$.

The families $(m_\lambda)_\lambda$ and $(p_\lambda)_\lambda$ form two $\IQ$-bases of $\Lambda$, so they are linearly related by $p_\lambda = \sum_{\mu} \psi_{\lambda\mu}m_\mu$. It turns out that the base change matrix $(\psi_{\lambda\mu})$ has integral entries, but its inverse $(\psi_{\mu\lambda}^{-1})$ has not. A method to determine the $(\psi_{\lambda\mu})$ is given by Lascoux in \cite[Sect. 3.7]{Lascoux}.
\end{definition}
\begin{definition}
A lattice $L$ is a free $\IZ$-module of finite rank, equipped with a non-degenerate symmetric integral bilinear form $B$. The lattice $L$ is called odd, if there exists a $v\in L$, such that $B(v,v)$ is odd, otherwise it is called even. 
If the map $v \mapsto B(v,v)$ takes both negative and positive values on $L$, the lattice is called indefinite. 
Choosing a base $\{e_i\}_i$ of our lattice, we can write $B$ as a symmetric matrix. $L$ is called unimodular, if the matrix $B$ has determinant $\pm 1$. 
The difference between the number of positive eigenvalues and the number of negative eigenvalues of $B$ (regarded as a matrix over $\IR$) is called the signature.
\end{definition}
There is the following classification theorem. See \cite[Chap. II]{milnor1973symmetric} for reference.
\begin{theorem}
Two indefinite unimodular lattices $L$, $L'$ are isometric iff they have the same rank, signature and parity. Evenness implies that the signature is divisible by 8. 
In particular, if $L$ is odd, then $L$ possesses an orthogonal basis and is hence isometric to $\left<1\right>^{\oplus k}\oplus\left<-1\right>^{\oplus l}$ for some $k,l\geq 0$. If $L$ is even, then $L$ is isometric to $U^{\oplus k}\oplus (\pm E_8)^{\oplus l}$ for some $k,l\geq 0$.
\end{theorem}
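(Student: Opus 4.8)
The forward implication is immediate and I would dispose of it first: an isometry is in particular a $\IZ$-module isomorphism, so it preserves the rank; it induces an isometry of the real quadratic spaces, so by Sylvester's law of inertia it preserves the signature; and it carries a vector of odd square to a vector of the same square, so it preserves the parity. Thus all the content lies in the reverse direction, which I would deduce from the stronger assertion that every indefinite unimodular lattice admits one of the stated \emph{normal forms}. Once existence of the normal form is established, uniqueness is automatic: two lattices sharing rank, signature and parity are assigned the same normal form, and the three invariants conversely determine $k,l$ (via $k+l=\rank L$, $k-l=\sigma$ in the odd case, and $2k+8l=\rank L$, $\pm 8l=\sigma$ in the even case), so they are isometric.

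The existence of the normal form rests on two tools. The first is a \emph{splitting lemma}: if a unimodular lattice $L$ contains a primitive isotropic vector $v$, then unimodularity yields $w$ with $B(v,w)=1$, and after adjusting $w$ so that $B(w,w)\in\{0,1\}$ the plane $\langle v,w\rangle$ is a nondegenerate unimodular sublattice, hence splits off orthogonally, giving $L\cong P\oplus L'$ with $L'$ unimodular of rank two smaller and the same signature; here $P=U$ when $B(w,w)$ can be arranged to be $0$ (forced in the even case) and $P=\langle 1\rangle\oplus\langle -1\rangle$ otherwise. The second tool is an \emph{isotropy input}: an indefinite unimodular lattice of rank $\geq 3$ always represents zero. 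I would obtain this from the Hasse--Minkowski theorem over $\IQ$ — an indefinite rational form in $\geq 5$ variables is isotropic unconditionally, and in $3$ or $4$ variables once the $p$-adic obstructions vanish, which they do for a form that is $\IZ_p$-unimodular — together with a direct check of the finitely many small-rank lattices. Iterating the splitting lemma while the orthogonal complement stays indefinite, and noting that $U$ is even (so each complement inherits the parity of $L$), reduces $L$ to $U^{\oplus p}\oplus M$ with $M$ definite, of the sign of $\sigma$, and of the same parity as $L$.

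It remains to absorb the definite summand $M$. In the odd case I would exploit the elementary basis identity $\langle 1\rangle\oplus U\cong\langle 1\rangle\oplus\langle 1\rangle\oplus\langle -1\rangle$ (verified by an explicit change of basis) to trade hyperbolic planes for diagonal $\pm 1$ blocks, producing an honestly indefinite lattice $\langle 1\rangle^{\oplus a}\oplus\langle -1\rangle^{\oplus b}\oplus M$; the surviving $\pm 1$ blocks then let one peel $M$ apart one vector at a time until the diagonal form $\langle 1\rangle^{\oplus k}\oplus\langle -1\rangle^{\oplus l}$ is reached. The divisibility $8\mid\sigma$ in the even case I would get from van der Blij's lemma: any characteristic vector $c$ satisfies $B(c,c)\equiv\sigma\pmod 8$, and for an even lattice $c=0$ is characteristic, so $\sigma\equiv 0\pmod 8$; this is exactly what makes $U^{\oplus k}\oplus(\pm E_8)^{\oplus l}$ able to realize the invariants, since $E_8$ is even unimodular of signature $8$ and $U$ is even unimodular of signature $0$.

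The main obstacle is the absorption of $M$, and it is genuinely the same difficulty in both parities. One must \emph{not} assume that a definite unimodular lattice is already in normal form: odd positive definite unimodular lattices need not be diagonal (for instance $E_8\oplus\langle 1\rangle$), and even positive definite unimodular lattices are far from being sums of copies of $E_8$ (already in rank $16$ there are two classes). What saves the argument is that the splitting always leaves the lattice indefinite, so a cancellation law is available: for odd lattices $M\oplus\langle 1\rangle\oplus\langle -1\rangle\cong\langle 1\rangle^{\oplus k}\oplus\langle -1\rangle^{\oplus l}$, and for even lattices $U\oplus M\cong U\oplus E_8^{\oplus q}$ when $M$ is positive definite even unimodular of rank $8q$. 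Establishing this cancellation — equivalently, the statement that an indefinite unimodular lattice of rank $\geq 3$ occupies a single class in its genus (Eichler's theorem, via strong approximation for the spin group) — is the deep step; the isotropy input above is the other essential ingredient, but it is far more standard.
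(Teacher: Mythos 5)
The paper offers no proof of this statement at all: it is quoted as a classical result with a pointer to Milnor--Husem\"oller, Chapter II. So your proposal is to be measured against the standard literature proof, and in outline it follows exactly that route: split off a unimodular plane at a primitive isotropic vector, feed in an isotropy theorem of Meyer type, trade $\langle 1\rangle\oplus U\cong\langle 1\rangle^{\oplus 2}\oplus\langle -1\rangle$ in the odd case, get $8\mid\sigma$ from van der Blij's lemma, and recognize honestly that absorbing the definite remainder is the deep step. That self-awareness is the strongest part of the write-up; most sketches of this theorem silently pretend the definite summand is already diagonal or a sum of $E_8$'s, which you correctly refuse to do.

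There is, however, one genuine gap: your justification of the isotropy input in ranks $3$ and $4$. You assert that the $p$-adic obstructions vanish ``for a form that is $\IZ_p$-unimodular.'' At $p=2$ this is false: the forms $x^2+y^2+z^2$ and $x^2+y^2+z^2+w^2$ are $\IZ_2$-unimodular yet anisotropic over $\IQ_2$ (the quaternary one is the norm form of the division quaternion algebra $(-1,-1)$ over $\IQ_2$, and the ternary one is a subform of it). So unimodularity alone does not kill the obstruction at $2$, and your fallback of ``a direct check of the finitely many small-rank lattices'' is circular, since before the classification is proved there is no a priori finite list of rank-$3$ or rank-$4$ unimodular lattices. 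The standard repair uses indefiniteness, not just unimodularity: at every odd $p$ the localization is a unit diagonal form, hence isotropic with trivial Hasse invariant; Hilbert's product formula then determines the Hasse invariant at $2$ from the archimedean one, and a short Hilbert-symbol computation shows the resulting invariant (together with the determinant $\pm 1$) is incompatible with the unique anisotropic local form over $\IQ_2$ in that rank. Hence the form is isotropic at every place and Hasse--Minkowski applies; this is precisely how the cited reference and Serre treat ranks $3$ and $4$. A final remark on economy: invoking Eichler's one-class-in-a-genus theorem via strong approximation for the cancellation step is legitimate (granting the routine verification that unimodular lattices of equal rank, signature and parity lie in one genus), but it is considerably heavier than the elementary inductive argument in Milnor--Husem\"oller's Chapter II, which settles the indefinite case without any spinor-genus machinery once the isotropy theorem is in hand.
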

\begin{definition}
Let $S$ be a projective K3 surface. We fix integral bases $\One$ of $H^0(S,\IZ)$, $x$ of $H^4(S,\IZ)$ and $\alpha_1,\ldots ,\alpha_{22}$ of $H^2(S,\IZ)$. The cup product induces a symmetric bilinear form $B_{H^2}$ on $H^2(S,\IZ)$ and thus the structure of a unimodular lattice.
We may extend $B_{H^2}$ to a symmetric non-degenerate bilinear form $B$ on $H^\ast(S,\IZ)$ by setting $ B(\One,\One) = 0,\ B(\One,\alpha_i) = 0,\ B(\One,x) = 1, \ B(x,x) = 0$.
\end{definition}
By the Hirzebruch index theorem, we know that $H^2(S,\IZ)$ has signature $-16$ and, by the classification theorem for indefinite unimodular lattices, is isomorphic to $U^{\oplus 3}\oplus (-E_8)^{\oplus 2}$.
\begin{definition}\label{comult}
$B$ induces a form $B\otimes B$ on $\Sym^2H^\ast(S,\IZ)$. So the cup-product 
\begin{equation*}
\mu : \Sym ^2H^{*}(S,\IZ) \longrightarrow H^\ast(S,\IZ) 
\end{equation*}
induces an adjoint comultiplication $\Delta$ that is coassociative, given by:
\begin{equation*}
\Delta : H^\ast(S,\IZ) \longrightarrow \Sym^2H^\ast(S,\IZ),\quad \Delta = -(B\otimes B)^{-1}\mu^TB
\end{equation*}
with the property $(B\otimes B)\left(\Delta(a),b\otimes c\right)=-B\left(a,b\smile c\right)$. Note that this does not define a bialgebra structure.
The image of $\One$ under the composite map $\mu\circ\Delta$, denoted by $e=24x$ is called the Euler Class.

More generally, every linear map $f: A^{\otimes k} \rightarrow A^{\otimes m}$ induces an adjoint map $g$ in the other direction that satisfies $(-1)^mB^{\otimes m}(f(x),y)= (-1)^{k} B^{\otimes k}(x,g(y))$.
\end{definition}
We denote by $S\hilb{n}$ the Hilbert scheme of $n$ points on $S$, \ie the classifying space of all zero-dimensional closed subschemes of length $n$. $S\hilb{0}$ consists of a single point and $S\hilb{1}=S$. Fogarty \cite[Thm.~2.4]{Fogarty} proved that the Hilbert scheme is a smooth variety.
A theorem by Nakajima \cite{Nakajima} gives an explicit description of the vector space structure of $H^\ast(S\hilb{n},\IQ)$ in terms of creation operators
$$
\kq_l(\beta) :  H^\ast(S\hilb{n},\IQ) \longrightarrow  H^{\ast+k+2(l-1)}(S\hilb{n+l},\IQ)
,$$ 
where $\beta\in H^k(S,\IQ)$, acting on the direct sum 
$\mathbb{H}:=\bigoplus_n H^\ast(S\hilb{n},\IQ)$. The operators $\kq_l(\beta)$ are linear and commute with each other. The vacuum vector $\vac$ is defined as the generator of $H^0(S\hilb{0},\IQ)\cong\IQ$. The images of $\vac$ under the polynomial algebra generated by the creation operators span $\mathbb{H}$ as a vector space. 
Following \cite{QinWang}, we abbreviate $\kq_{l_1}(\beta)\ldots\kq_{l_k}(\beta)=:\kq_\lambda(\beta)$, where the partition $\lambda$ is composed by the $l_i$. 

An integral basis for $H^\ast(S\hilb{n},\IZ)$ in terms of Nakajima's operators was given by Qin--Wang:
\begin{theorem} \label{QinWangTheorem}\cite[Thm. 5.4.]{QinWang} Let $\km_{\nu,\alpha} := \sum_\rho \psi_{\nu\rho}^{-1}\,\kq_{\rho}(\alpha)$, with coefficients $ \psi_{\nu\rho}^{-1}$ as in Definition \ref{SymFun}. The classes
$$ \frac{1}{z_\lambda} \kq_\lambda(1)\kq_\mu(x)\km_{\nu^1,\alpha_1}\ldots\km_{\nu^{22},\alpha_{22}}\vac,\quad \|\lambda\| +\|\mu\| + \sum_{i=1}^{22}\|\nu^i\| = n
$$ 
form an integral basis for $H^\ast(S\hilb{n},\IZ)$. Here,
$\lambda,\; \mu,\; \nu^i$ are partitions.
\end{theorem}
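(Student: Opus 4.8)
The plan is to identify the claimed classes with an integral basis through the chain: the cohomology $H^\ast(S\hilb{n},\IZ)$ is torsion-free, hence Poincar\'e duality makes it a \emph{unimodular} lattice; so it suffices to produce classes that (i) form a $\IQ$-basis, (ii) are integral, and (iii) span a \emph{unimodular} sublattice, since a unimodular sublattice of a unimodular lattice of the same rank is the whole lattice. Property (i) is immediate from Nakajima's theorem: the geometric monomials $\kq_\lambda(\One)\kq_\mu(x)\kq_{\rho^1}(\alpha_1)\cdots\kq_{\rho^{22}}(\alpha_{22})\vac$ already form a $\IQ$-basis, and passing to the stated classes only rescales the $\One$-factor by $1/z_\lambda$ and applies the invertible matrices $(\psi^{-1}_{\nu\rho})$ on each $\alpha_i$-factor. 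The whole shape of the statement is governed by the orthogonal splitting $H^\ast(S,\IZ)=U\oplus H^2$, where $U=\langle\One,x\rangle$ is hyperbolic with $B(\One,x)=1$ and $H^2$ is the even unimodular K3 lattice; the three different normalisations correspond to these three factors.

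For integrality I would first note that each $\kq_l(\beta)$ with integral $\beta$ is given by an algebraic incidence correspondence, so the unnormalised monomials $\kq_\rho(\beta)\vac$ are integral; the content is the cancellation of the denominators $z_\lambda$ and $\psi^{-1}_{\nu\rho}$. On the $H^2$-factors this is exactly the statement that the monomial symmetric functions are integral: under the boson--Fock identification $\kq_l(\alpha)\vac\leftrightarrow p_l$ one has $\km_{\nu,\alpha}\vac\leftrightarrow m_\nu\in\Lambda$, and here the evenness of $B(\alpha,\alpha)$ is precisely what makes the rational power-sum coefficients combine to integers. On the hyperbolic factor the key input is the Heisenberg computation $\big\langle\kq_\lambda(\One)\vac,\kq_\mu(x)\vac\big\rangle=z_\lambda\,\delta_{\lambda\mu}$ (from $B(\One,x)=1$), which exhibits $\tfrac{1}{z_\lambda}\kq_\lambda(\One)\vac$ as the Poincar\'e-dual, and hence the primitive integral, partner of the integral point-type classes $\kq_\mu(x)\vac$. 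Integrality of a general product then follows from the multiplicativity of these three integral structures inside the Fock space $\mathbb{H}$.

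To establish (iii) I would compute the Gram matrix of the spanned lattice $\Gamma$ with respect to the Poincar\'e pairing, using the commutation relations; since $\det$ of the Gram of $H^\ast(S\hilb{n},\IZ)$ is $\pm1$, one has $[H^\ast(S\hilb{n},\IZ):\Gamma]^2=\lvert\det\mathrm{Gram}(\Gamma)\rvert$, so spanning is equivalent to $\Gamma$ being unimodular. On the hyperbolic part the normalisation above yields, in block form, the antidiagonal matrix $\left(\begin{smallmatrix}0&I\\I&0\end{smallmatrix}\right)$, which is unimodular, this being the entire reason for the asymmetric $z_\lambda$-normalisation between $\One$ and $x$. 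On the $H^2$-part the unimodularity must be deduced from the fact that the lattice $H^2$ is itself unimodular, together with the duality between the monomial basis $m_\nu$ and the complete-homogeneous basis $h_\nu$ of $\Lambda$.

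The step I expect to be the main obstacle is exactly this $H^2$-part unimodularity, equivalently the assertion that the monomial classes $\km_{\nu,\alpha}$ \emph{generate} the integral cohomology in the $\alpha$-directions rather than merely lie in it. The difficulty is that the phenomenon is irreducibly global: a single even class contributes a block whose determinant is a nontrivial power of $B(\alpha,\alpha)$, so the cancellation to determinant $\pm1$ cannot be seen one class at a time and genuinely consumes the unimodularity of the whole K3 lattice $H^2$. Controlling this interplay between the Hall-pairing combinatorics of symmetric functions and lattice unimodularity, uniformly across all degrees and all the mixed products with the $\One$- and $x$-factors, is where the real work lies.
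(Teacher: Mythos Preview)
The paper does not prove this theorem; it is quoted from Qin--Wang \cite{QinWang} and used as input for the rest of the paper. So there is no in-paper proof to compare against, and your proposal is really a sketch of how one might reprove their result.

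Your three-step architecture (rational basis, integrality, unimodular Gram matrix) is sound, and steps (i) and (iii) are essentially correct: Nakajima's theorem gives the $\IQ$-basis, and the Heisenberg commutation relations let one compute the Poincar\'e Gram matrix block by block, with the asymmetric $z_\lambda$-normalisation on the hyperbolic $\langle\One,x\rangle$-piece producing a permutation matrix just as you say.

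The genuine gap is step (ii), integrality. On the $H^2$-factors your argument is not an argument: the boson--Fock identification $\kq_l(\alpha)\vac\leftrightarrow p_l$ is only a $\IQ$-linear isomorphism of abstract vector spaces and carries no information about the lattice $H^\ast(S\hilb{n},\IZ)$; the sentence ``the evenness of $B(\alpha,\alpha)$ is precisely what makes the rational power-sum coefficients combine to integers'' is neither justified nor correct as a mechanism. Likewise, your Poincar\'e-dual remark for $\tfrac{1}{z_\lambda}\kq_\lambda(\One)\vac$ only shows it pairs integrally against the classes $\kq_\mu(x)\vac$; concluding that the class itself is integral would require knowing that the $x$-classes span a \emph{saturated} sublattice, which is part of what you are trying to prove. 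In Qin--Wang the integrality is obtained \emph{geometrically}: the operators $\tfrac{1}{z_\lambda}\kq_\lambda(\One)$ and $\km_{\nu,\alpha}$ are realised by explicit algebraic correspondences (closures of strata for the Hilbert--Chow morphism and related incidence cycles), so they manifestly preserve integral cohomology. Your proposal supplies no substitute for this geometric input, and I know of no purely formal Fock-space replacement. You have also slightly mislocated the difficulty: the obstacle you flag in (iii) dissolves once (ii) is in hand, since the Gram computation is then routine; it is (ii) that carries the content.
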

\begin{notation}\label{notation}
To enumerate the basis of $H^\ast(S\hilb{n},\IZ)$, we introduce the following abbreviation:
$$ \boldsymbol\alpha^{\boldsymbol\lambda} :=
\One^{\lambda^0} \alpha_1^{\lambda^1}\ldots\alpha_{22}^{\lambda^{22}}x^{\lambda^{23}} :=
\frac{1}{z_{\widetilde{\lambda^0}} }
\kq_{\widetilde{\lambda^0}}(\One)\kq_{\lambda^{23}}(x)\km_{\lambda^1,\alpha_1}\ldots\km_{\lambda^{22},\alpha_{22}}\vac
$$
where the partition $\widetilde{\lambda^0}$ is built from $\lambda^0$ by appending sufficiently many ones, such that $\left\|\widetilde{\lambda^0}\right\| +\sum_{i\geq 1}\left\|\lambda^i\right\| = n $. If $\sum_{i\geq 0}\left\|\lambda^i\right\| > n, $ we put $\boldsymbol\alpha^{\boldsymbol\lambda}=0$. Thus we can interpret $\boldsymbol\alpha^{\boldsymbol\lambda}$ as an element of $H^\ast(S\hilb{n},\IZ)$ for arbitrary $n$. We say that the symbol $\boldsymbol\alpha^{\boldsymbol\lambda}$ is reduced, if $\lambda^0$ contains no ones. We define also $\left\|\boldsymbol\lambda\right\| := \sum_{i\geq 0}\left\|\lambda^i\right\|$. 
\end{notation}
\begin{lemma}\label{degBound}
Let $\boldsymbol\alpha^{\boldsymbol\lambda}$ represent a class of cohomological degree $2k$. If $\boldsymbol\alpha^{\boldsymbol\lambda}$ is reduced, then $\frac{k}{2}\leq\left\|\boldsymbol{\lambda}\right\| \leq 2k$.
\begin{proof} This is a simple combinatorial observation. We give the two extremal cases.
The lowest ratio between $\left\|\boldsymbol{\lambda}\right\|$ and $\deg \boldsymbol\alpha^{\boldsymbol\lambda}$ is achieved by the classes $x^{(1^m)}$, where the degree is $4m$ and the weight of $\boldsymbol{\lambda}$ is $m$. The highest ratio is achieved by the classes $1^{(2^m)}$, where both degree and weight equal $2m$. So $\frac{1}{4}\leq\frac{\left\|\boldsymbol{\lambda}\right\|}{\deg \boldsymbol\alpha^{\boldsymbol\lambda}}\leq 1$.
\end{proof}
\end{lemma}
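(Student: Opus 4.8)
The plan is to turn the weight--degree comparison into one explicit linear identity relating the cohomological degree $2k$ to the partition data $\boldsymbol\lambda=(\lambda^0,\dots,\lambda^{23})$ of Notation~\ref{notation}, and then to read off both inequalities from it. First I would compute $\deg\boldsymbol\alpha^{\boldsymbol\lambda}$ by pushing the degree through Nakajima's operators, using that $\kq_l(\beta)$ with $\beta\in H^j(S,\IQ)$ raises cohomological degree by $j+2(l-1)$ and that $\vac$ sits in degree $0$.

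A point that must be settled before this bookkeeping makes sense is that $\boldsymbol\alpha^{\boldsymbol\lambda}$ is homogeneous at all. The factors $\kq_{\widetilde{\lambda^0}}(\One)$ and $\kq_{\lambda^{23}}(x)$ are single Nakajima monomials, but each $\km_{\lambda^i,\alpha_i}$ is a $\IZ$-combination of operators $\kq_\rho(\alpha_i)$ with $\|\rho\|=\|\lambda^i\|$ (the transition matrix between $m_\lambda$ and $p_\lambda$ preserves degree). Here I would use $\deg\alpha_i=2$: a part $l$ of $\rho$ contributes $2+2(l-1)=2l$, so $\kq_\rho(\alpha_i)$ lands in degree $2\|\rho\|$, which depends only on the weight and not on the length. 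Hence all terms of $\km_{\lambda^i,\alpha_i}$ share the degree $2\|\lambda^i\|$ and $\boldsymbol\alpha^{\boldsymbol\lambda}$ is genuinely homogeneous. The $\One$-factor (degree $0$) contributes $2(\|\widetilde{\lambda^0}\|-|\widetilde{\lambda^0}|)$ and the $x$-factor (degree $4$) contributes $2(\|\lambda^{23}\|+|\lambda^{23}|)$.

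Summing gives $2k=2(\|\widetilde{\lambda^0}\|-|\widetilde{\lambda^0}|)+\sum_{i=1}^{22}2\|\lambda^i\|+2(\|\lambda^{23}\|+|\lambda^{23}|)$. The essential simplification is that the ones appended to $\lambda^0$ to form $\widetilde{\lambda^0}$ cancel, since $\|\widetilde{\lambda^0}\|-|\widetilde{\lambda^0}|=\|\lambda^0\|-|\lambda^0|$; this makes $k$ independent of $n$ and yields $k=\|\lambda^0\|-|\lambda^0|+\sum_{i=1}^{22}\|\lambda^i\|+\|\lambda^{23}\|+|\lambda^{23}|$. Comparing with $\|\boldsymbol\lambda\|=\|\lambda^0\|+\sum_{i=1}^{22}\|\lambda^i\|+\|\lambda^{23}\|$ reduces both bounds to elementary partition inequalities.

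Finally I would subtract. For the upper bound, $2k-\|\boldsymbol\lambda\|=(\|\lambda^0\|-2|\lambda^0|)+\sum_{i=1}^{22}\|\lambda^i\|+\|\lambda^{23}\|+2|\lambda^{23}|$, and reducedness enters exactly here: since $\lambda^0$ has no ones every part is at least $2$, so $\|\lambda^0\|\geq 2|\lambda^0|$ and every summand is nonnegative. For the lower bound, $2\|\boldsymbol\lambda\|-k=\|\lambda^0\|+|\lambda^0|+\sum_{i=1}^{22}\|\lambda^i\|+(\|\lambda^{23}\|-|\lambda^{23}|)\geq 0$, because any partition has weight at least its length. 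I expect the only real obstacle to be the degree bookkeeping itself, in particular verifying the homogeneity of the $\km$-factors and seeing the appended ones drop out; once these are in place the inequalities are immediate, and the extremal classes $x^{(1^m)}$ and $\One^{(2^m)}$ confirm that both bounds are attained.
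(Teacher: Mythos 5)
Your proof is correct, and it rests on the same combinatorial observation as the paper's, but you carry it out in full where the paper only gestures: the paper's proof merely exhibits the two extremal classes $x^{(1^m)}$ and $\One^{(2^m)}$ and asserts $\tfrac14\leq\|\boldsymbol\lambda\|/\deg\boldsymbol\alpha^{\boldsymbol\lambda}\leq 1$, whereas you derive the explicit identity $k=\|\lambda^0\|-|\lambda^0|+\sum_{i=1}^{22}\|\lambda^i\|+\|\lambda^{23}\|+|\lambda^{23}|$ and obtain both bounds by termwise nonnegativity. Your bookkeeping is right: each part $l$ of $\widetilde{\lambda^0}$ contributes $2(l-1)$, each part $l$ of $\lambda^{23}$ contributes $2(l+1)$, and each $\km_{\lambda^i,\alpha_i}$ is homogeneous of degree $2\|\lambda^i\|$ because the monomial--power-sum transition preserves the weight of the partition and $\kq_\rho(\alpha_i)$ has degree $2\|\rho\|$ independent of the length of $\rho$ --- a point the paper leaves entirely implicit and which you were right to check. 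The cancellation $\|\widetilde{\lambda^0}\|-|\widetilde{\lambda^0}|=\|\lambda^0\|-|\lambda^0|$ (each appended one adds $1$ to both weight and length) is also correct. What your route buys beyond the paper's: it is an actual proof rather than an inspection of extremal cases (the paper's argument would strictly need an additional exchange or convexity remark to rule out intermediate configurations), and it localizes precisely where reducedness is used --- only the upper bound $\|\boldsymbol\lambda\|\leq 2k$ needs $\|\lambda^0\|\geq 2|\lambda^0|$, while the lower bound $k\leq 2\|\boldsymbol\lambda\|$ holds unconditionally since $\|\lambda^{23}\|\geq|\lambda^{23}|$. Your closing check that $x^{(1^m)}$ and $\One^{(2^m)}$ attain the two bounds recovers exactly the paper's extremal cases and shows both inequalities are sharp.
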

The ring structure of $H^\ast(S\hilb{n}, \IQ)$ has been studied by Lehn and Sorger in \cite{LehnSorger}, where an explicit algebraic model is constructed, which we recall briefly:
\begin{definition} \label{model}\cite[Sect. 2]{LehnSorger}
Let $\pi$ be a permutation of $n$ letters, written as a product of disjoint cycles. To each cycle we may associate an element of $A:=H^\ast(S,\IQ)$. This defines an element in $A^{\otimes m}$, $m$ being the number of cycles. For example, a term like $(1\,2\,3)_{\alpha_1}(4\,5)_{\alpha_2}$ may describe a permutation consisting of two cycles with associated classes $\alpha_1,\alpha_2\in A$. We interpret the cycles as the orbits of the subgroup $\left<\pi\right>\subset S_n$ generated by $\pi$. We denote the set of orbits by $\left<\pi\right>\backslash[n]$. Thus we construct a vector space $A\{S_n\}:=\bigoplus_{\pi\in S_n} A^{\otimes\left<\pi\right>\backslash[n]}$. 

To define a ring structure, take two permutations $\pi,\,\tau \in S_n$ and the subgroup $\left< \pi,\tau\right>$ generated by them. The natural map of orbit spaces
$
p_\pi:\left<\pi\right>\backslash[n] \rightarrow \left<\pi,\tau\right>\backslash[n]
$
induces a map $f^{\pi,\left<\pi,\tau\right>} : A^{\otimes\left<\pi\right>\backslash[n]} \rightarrow A^{\otimes\left<\pi,\tau\right>\backslash[n]}$, which multiplies the factors of an elementary tensor if the corresponding orbits are glued together.
Denote $f_{\left<\pi,\tau\right>,\pi} $ the adjoint to this map in the sense of Definition \ref{comult}. Then the map
\begin{gather*}
m_{\pi,\tau} : A^{\otimes\left<\pi\right>\backslash[n]} \otimes A^{\otimes\left<\tau\right>\backslash[n]} \longrightarrow A^{\otimes\left<\pi\tau\right>\backslash[n]} ,  \\
a\otimes b \longmapsto  f_{\left<\pi,\tau\right>,\pi\tau} (f^{\pi,\left<\pi,\tau\right>} (a)\cdot f^{\tau,\left<\pi,\tau\right>}(b)\cdot  e^{g(\pi,\tau)} ) 
\end{gather*}
defines a multiplication on $A\{S_n\}$. Here the dot means the cup product on each tensor factor and $e^{g(\pi,\tau)} \in A^{\otimes\left<\pi,\tau\right>\backslash[n]}$ is an elementary tensor that is composed by powers of the Euler class $e$: for each orbit $B \in  {\otimes\left<\pi,\tau\right>\backslash[n]}$ the exponent $g(\pi,\tau)(B)$ (so-called "graph defect", see \cite[2.6]{LehnSorger}) is given by:
$$
g(\pi,\tau)(B) = \frac{1}{2}\left( |B| +2 - |p_\pi^{-1}(\{B\})|- |p_\tau^{-1}(\{B\})| - |p_{\pi\tau}^{-1}(\{B\})|  \right).
$$

\end{definition}
The symmetric group $S_n$ acts on $A\{S_n\}$ by conjugation, permuting the direct summands: conjugation by $\sigma\in S_n$ maps $A^{\otimes\left<\pi\right>\backslash[n]}$ to $A^{\otimes\left<\sigma\pi\sigma^{{-}1}\right>\backslash[n]}$.
This action preserves the ring structure. Therefore the space of invariants $A\hilb{n} := \left(A\{S_n\} \right)^{S_n}$ becomes a subring. The main theorem of \cite{LehnSorger} can now be stated:
\begin{theorem} \label{LSThm}\cite[Thm. 3.2.]{LehnSorger}
The following map is an isomorphism of rings:
\begin{align*}
H^\ast(S\hilb{n},\IQ) & \longrightarrow A\hilb{n} \\
\kq_{n_1}(\beta_1)\ldots \kq_{n_k}(\beta_k) \vac &\longmapsto \sum_{\sigma\in S_n} \sigma a \sigma^{{-}1} 
\end{align*}
with $\sum_i n_i=n$ and $a =(1\,2\ldots n_1)_{\beta_1}(n_1\! +\! 1\ldots n_1\!+\! n_2)_{\beta_2}\cdots (n\!-\!n_k \ldots n)_{\beta_k}\in A\{S_n\}$.
\end{theorem}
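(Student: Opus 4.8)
The plan is to denote the asserted map by $\Phi\colon H^\ast(S\hilb{n},\IQ)\to A\hilb{n}$ and to prove separately that $\Phi$ is a well-defined linear isomorphism and that it respects products. Since Nakajima's theorem already supplies a vector-space basis of the source in terms of the operators $\kq_\lambda$, the first task is essentially bookkeeping, while the genuine content lies in the multiplicativity, for which I would reduce the entire ring structure to the single operator of cup product with the boundary class.

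For the linear isomorphism, I would decompose $A\{S_n\}=\bigoplus_{\pi\in S_n}A^{\otimes\langle\pi\rangle\backslash[n]}$ according to the conjugacy class of $\pi$. Conjugacy classes in $S_n$ are indexed by cycle types, i.e. by partitions $\lambda=(\ldots,2^{m_2},1^{m_1})$ of $n$, and conjugation permutes the summands transitively within each class. Taking $S_n$-invariants of the summand attached to $\lambda$ therefore yields $\bigotimes_i \Sym^{m_i}A$, the symmetric powers accounting for the freedom to permute cycles of equal length while the cyclic rotations inside a single cycle fix its tensor factor. On the Nakajima side, the monomial $\kq_\lambda$ with classes $\beta_i\in A$ attached to the parts, symmetrized over parts of equal length, produces exactly the same data, so $\Phi$ matches these pieces. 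I would then check that the gradings agree: a cycle of length $l$ carrying a class $\beta\in H^k(S)$ contributes degree $k+2(l-1)$ in the model, which is precisely the degree shift of $\kq_l(\beta)$ recorded in Nakajima's theorem. This identifies the graded dimensions and shows $\Phi$ is a graded linear isomorphism.

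For multiplicativity, my strategy would be to avoid comparing arbitrary products directly and instead to exploit that the product on $\mathbb{H}=\bigoplus_n H^\ast(S\hilb{n},\IQ)$ is already determined, as a module over the Heisenberg algebra of creation operators, by the Poincar\'e pairing together with the single operator $\partial$ of cup product with the boundary class (the class of the locus of non-reduced subschemes). The key input is Lehn's computation expressing the commutator $[\partial,\kq_l(\beta)]$ as an explicit combination of Nakajima operators. I would then verify that the model carries an operator with identical commutation relations: multiplication by the image of the boundary class under the maps $m_{\pi,\tau}$ produces exactly the same formula once the Euler-class factors $e^{g(\pi,\tau)}$ are taken into account. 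Since $\Phi$ intertwines the creation operators by construction and sends the distinguished boundary class to the distinguished boundary class, agreement of the two operators $\partial$ forces $\Phi$ to be a ring homomorphism, hence an isomorphism.

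The main obstacle is precisely this last comparison: the geometric cup product contains excess-intersection (Euler class) corrections arising from the singularities met along the Hilbert--Chow morphism, and these must be matched term by term against the algebraically prescribed graph-defect insertions $e^{g(\pi,\tau)}$ in the definition of $m_{\pi,\tau}$. Controlling this requires both Lehn's boundary-operator formula and a careful combinatorial analysis of how the orbits of $\langle\pi,\tau\rangle$ are glued, which is where the coassociative Frobenius structure of $A$ from Definition \ref{comult} and the exponent $g(\pi,\tau)(B)$ enter decisively.
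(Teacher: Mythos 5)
A preliminary remark: the paper itself contains no proof of this statement --- it is quoted verbatim from Lehn--Sorger \cite[Thm.~3.2]{LehnSorger} --- so your attempt can only be measured against the original argument. In outline your sketch does track that argument faithfully: the linear isomorphism via Nakajima's theorem and the conjugacy-class decomposition of $A\{S_n\}$ (with invariants $\bigotimes_i \Sym^{m_i}A$ and the degree bookkeeping $k+2(l-1)$, which correctly matches the shift of $\kq_l(\beta)$), and multiplicativity reduced to a distinguished operator $\partial$ compared against the graph-defect insertions $e^{g(\pi,\tau)}$ in the model.

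As a proof, however, there is a genuine gap at the decisive step. You assert that because $\Phi$ intertwines the creation operators and the single operator $\partial$, it is ``forced'' to be a ring homomorphism. That implication is not formal: multiplicativity concerns the operators $m(x)$ of cup product with \emph{arbitrary} classes $x$, and intertwining $\partial$ helps only if one knows that every such $m(x)$ --- or at least those attached to a set of ring generators --- lies in the operator algebra generated by the $\kq_l(\beta)$ and $\partial$. Concretely, one needs that $H^\ast(S\hilb{n},\IQ)$ is generated as a ring by tautological classes whose multiplication operators are expressible through iterated commutators such as $[\partial,\kq_1(\beta)]$; this generation statement is a nontrivial theorem (established in the K3 setting by Lehn--Sorger, and for general surfaces by Li, Qin and Wang), and without it your argument shows only that $\Phi$ respects multiplication by the boundary class itself. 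A second, quieter gap: your claim that ``the model carries an operator with identical commutation relations'' silently uses $K_S=0$. Lehn's formula for $[\partial,\kq_l(\beta)]$ on the geometric side contains the correction term $\binom{l}{2}\kq_l(K_S\beta)$, which has no counterpart in the Frobenius model, so the two operator calculi agree only because $S$ is a K3 surface; likewise your identification of the invariants with plain (rather than sign-graded) symmetric powers uses $H^{\mathrm{odd}}(S,\IQ)=0$. Neither hypothesis is invoked anywhere in your sketch, yet both are precisely where the K3 assumption enters the proof.
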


Since $H^\text{odd}(S\hilb{n},\IZ) = 0$ and $H^\text{even}(S\hilb{n},\IZ)$ is torsion-free by \cite{Markman}, we can apply these results to $H^\ast(S\hilb{n}, \IZ)$ to determine the multiplicative structure of cohomology with integer coefficients. It turns out, that it is somehow independent of $n$. More precisely, we have the following stability theorem, by Li, Qin and Wang:
\begin{theorem} \label{stability}\emph{(Derived from \cite[Thm.~2.1]{QinWang}).}
Let $Q_1,\ldots,Q_s$ be products of creation operators, \ie $Q_i = \prod_j \kq_{\lambda_{i,j}}(\beta_{i,j}) $ for some partitions $\lambda_{i,j}$ and classes $\beta_{i,j}\in H^\ast(S,\IZ)$. Set $n_i := \sum_j \left\|\lambda_{i,j}\right\|$.
Then the cup product 
$ \prod_{i=1}^s \left(\frac{1}{(n-n_i)!} \kq_{1^{n-n_i}}(1) \,Q_i \,\vac \right)$ equals a finite linear combination of classes of the form $\frac{1}{(n-m)!}\kq_{1^{n-m}}(1)\prod_j \kq_{\mu_{j}}(\gamma_{j})\vac$, with $\gamma\in H^\ast(S,\IZ)$, $m=\sum_j\left\|\mu_j\right\|$, whose coefficients are independent of $n$. We have the upper bound $m\leq\sum_i n_i$. Moreover, $m=\sum_i n_i$ if and only if the corresponding class is $\frac{1}{(n-m)!}\kq_{1^{n-m}}(1)Q_1Q_2\ldots Q_s\vac$ with coefficient $1$.
\end{theorem}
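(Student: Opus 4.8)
The plan is to translate the statement into the Lehn--Sorger model of Theorem \ref{LSThm}, where the weight bound and the shape of the leading term become transparent combinatorics of permutations, and to borrow the genuinely delicate part---the $n$-independence of the structure constants---from \cite[Thm.~2.1]{QinWang}. Under the isomorphism of Theorem \ref{LSThm} each factor $\frac{1}{(n-n_i)!}\kq_{1^{n-n_i}}(1)\,Q_i\,\vac$ is the $S_n$-symmetrization of a permutation $\pi_i$ whose non-trivial cycles are exactly those prescribed by $Q_i$, carrying the classes $\beta_{i,j}$, with the remaining $n-n_i$ points fixed and labelled by $\One$; the factorial is precisely the normalization of the symmetrization over these fixed points, and the support of $\pi_i$ has $n_i$ elements. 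First I would record that $\kq_1(\One)$ raises cohomological degree by $0+2(1-1)=0$, so the degree of each factor, and hence of the whole product, is independent of $n$; since only finitely many basis classes of Theorem \ref{QinWangTheorem} occur in a fixed degree, the expansion is automatically a finite sum.

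Next I would perform the product in the model of Definition \ref{model}. Iterating the multiplication $m_{\pi,\tau}$ over the $s$ factors and resymmetrizing yields terms indexed by products $\rho=\sigma_1\cdots\sigma_s$, where each $\sigma_i$ is a conjugate of $\pi_i$. Every such $\rho$ fixes any point fixed by all the $\sigma_i$, so $\mathrm{supp}(\rho)\subseteq\bigcup_i\mathrm{supp}(\sigma_i)$. After rewriting the output in the basis of Theorem \ref{QinWangTheorem}, the weight $m=\sum_j\|\mu_j\|$ of the essential part counts precisely the points lying outside the $\One$-labelled padding, namely the points in non-trivial $\rho$-cycles together with the fixed points that acquire a power of the Euler class. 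By the graph-defect formula of Definition \ref{model} the latter occur only on orbits of $\langle\pi,\tau\rangle$ that have been glued, and those orbits meet the supports; hence all essential points lie in $\bigcup_i\mathrm{supp}(\sigma_i)$, giving $m\leq\sum_i|\mathrm{supp}(\sigma_i)|=\sum_i n_i$.

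For the assertion that the coefficients are independent of $n$ I would appeal directly to \cite[Thm.~2.1]{QinWang}. Once the inputs are normalized by the factors $\frac{1}{(n-n_i)!}$, the structure constants of the multiplication are obtained by counting the placements and gluings of the supports of the $\sigma_i$ inside $[n]$, and the combined effect of these (polynomial in $n$) counts with the output rescaling $\frac{1}{(n-m)!}$ is to cancel all $n$-dependence. The work here is purely one of matching our normalized classes to the stable classes of \loccit and checking that the symmetrization conventions agree.

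Finally, to pin down the top term I would examine the equality case $m=\sum_i n_i$. Equality forces $\bigl|\bigcup_i\mathrm{supp}(\sigma_i)\bigr|=\sum_i n_i$, i.e. the supports are pairwise disjoint; then $\langle\pi,\tau\rangle$ never glues two essential cycles, the graph defect vanishes on them, and no Euler class is introduced. The only configuration that survives is the disjoint juxtaposition of all cycles of $Q_1,\ldots,Q_s$, which is exactly $\frac{1}{(n-m)!}\kq_{1^{n-m}}(1)\,Q_1Q_2\ldots Q_s\,\vac$, and a direct count of the symmetrizations returns coefficient $1$. I expect the main obstacle to be the $n$-independence of the intermediate coefficients: it rests on the stability estimates of \cite{QinWang} and on a careful reconciliation of the factorial normalizations used here with the symmetrization conventions there, whereas the weight bound and the equality analysis follow comparatively routinely from the support inclusion $\mathrm{supp}(\rho)\subseteq\bigcup_i\mathrm{supp}(\sigma_i)$.
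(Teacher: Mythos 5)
The paper never writes out a proof of this theorem: it is stated as \emph{derived from} \cite[Thm.~2.1]{QinWang}, and the text adds only the normalization conventions, so there is no internal argument to compare yours against line by line. Your reconstruction is sound in outline and follows the route the citation implies: like the paper, you delegate the genuinely hard part --- the $n$-independence of the structure constants --- to the stability theorem of Li--Qin--Wang, which is exactly what that reference asserts for classes normalized by $\frac{1}{(n-n_i)!}\kq_{1^{n-n_i}}(\One)$, and you supply the weight bound and the equality case yourself via the Lehn--Sorger model (legitimate here, since Theorem \ref{LSThm} over $\IQ$ suffices to identify coefficients). Your support argument is the correct mechanism: every non-singleton orbit of the group generated by the $\sigma_i$ lies in $\bigcup_i \mathrm{supp}(\sigma_i)$, points outside all supports persist as $\One$-labelled singleton orbits with zero graph defect, hence every essential point lies in the union of the supports, giving $m\leq\sum_i n_i$, and equality forces the supports to be pairwise disjoint, which kills all gluing and Euler factors and leaves exactly the juxtaposition $Q_1\cdots Q_s$. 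Two soft spots deserve flagging. First, your description of the essential fixed points as those ``that acquire a power of the Euler class'' is too narrow: a fixed point of $\rho$ inside a glued orbit can also carry a label coming from the cup product of the $\beta_{i,j}$ or from a component of the adjoint comultiplication $f_{\left<\pi,\tau\right>,\pi\tau}$ --- for instance $\kq_1(\alpha)$ against $\kq_1(\beta)$ on overlapping supports produces an $x$-labelled fixed point with coefficient $B(\alpha,\beta)$. Your subsequent (and correct) assertion that \emph{all} essential points lie in the union of supports is what actually carries the bound, so this is an imprecision rather than an error, but as written the characterization is false. Second, the coefficient-$1$ claim for the leading term is asserted, not computed: one must verify that the normalizations $\frac{1}{(n-n_i)!}$, the count of disjoint placements of the supports inside $[n]$ under symmetrization, and the output normalization $\frac{1}{(n-m)!}$ combine to exactly $1$; this is a routine multinomial count, but it is precisely the factorial bookkeeping your sketch promises and does not perform, and it is the step where an error would silently rescale the top term. (A marginal caveat inherent in the statement rather than in your argument: if some $Q_i$ itself contains factors $\kq_1(\One)$, the ``only if'' direction requires counting those factors inside $m$, since the representation by classes $\prod_j\kq_{\mu_j}(\gamma_j)\vac$ is then not unique.)
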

\begin{corollary} \label{stabCor} Let $\boldsymbol{\alpha}^{\boldsymbol{\lambda}},\boldsymbol{\alpha}^{\boldsymbol{\mu}},\boldsymbol{\alpha}^{\boldsymbol{\nu}}$ be reduced. Assume $n\geq\left\|\boldsymbol\lambda\right\|,\left\|\boldsymbol\mu\right\| $. Then the coefficients $c^{\boldsymbol{\lambda\mu}}_{\boldsymbol{\nu}}$ of the cup product in $H^\ast(S\hilb{n},\IZ)$
$$\boldsymbol{\alpha}^{\boldsymbol{\lambda}} \smile
\boldsymbol{\alpha}^{\boldsymbol{\mu}}
= \sum_{\boldsymbol{\nu}} c^{\boldsymbol{\lambda\mu}}_{\boldsymbol{\nu}} \boldsymbol{\alpha}^{\boldsymbol{\nu}}
$$  
are polynomials in $n$ of degree at most $ \left\|\boldsymbol\lambda\right\|+\left\|\boldsymbol\mu\right\|-\left\|\boldsymbol\nu\right\|$.
\end{corollary}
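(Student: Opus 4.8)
The plan is to reduce everything to Theorem \ref{stability} by rewriting the basis vectors $\boldsymbol{\alpha}^{\boldsymbol{\lambda}}$ in the shape to which that theorem applies, and then to track exactly where the dependence on $n$ enters when one passes back to the $\boldsymbol{\alpha}^{\boldsymbol{\nu}}$-basis. First I would put $\boldsymbol{\alpha}^{\boldsymbol{\lambda}}$ into ``stable form''. Since $\boldsymbol{\lambda}$ is reduced, $\lambda^0$ contains no ones, so $\widetilde{\lambda^0}$ is obtained from $\lambda^0$ by appending exactly $n-\left\|\boldsymbol{\lambda}\right\|$ ones. As the operators $\kq_l$ commute, the padding factors off, $\kq_{\widetilde{\lambda^0}}(\One)=\kq_{1^{n-\left\|\boldsymbol{\lambda}\right\|}}(1)\,\kq_{\lambda^0}(\One)$, and comparing multiplicities in Definition \ref{SymFun} gives $z_{\widetilde{\lambda^0}}=(n-\left\|\boldsymbol{\lambda}\right\|)!\,z_{\lambda^0}$. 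Hence
\begin{equation*}
\boldsymbol{\alpha}^{\boldsymbol{\lambda}}=\frac{1}{(n-\left\|\boldsymbol{\lambda}\right\|)!}\,\kq_{1^{n-\left\|\boldsymbol{\lambda}\right\|}}(1)\,Q_{\boldsymbol{\lambda}}\vac,\qquad Q_{\boldsymbol{\lambda}}:=\frac{1}{z_{\lambda^0}}\kq_{\lambda^0}(\One)\kq_{\lambda^{23}}(x)\km_{\lambda^1,\alpha_1}\ldots\km_{\lambda^{22},\alpha_{22}},
\end{equation*}
where $Q_{\boldsymbol{\lambda}}$ does not depend on $n$, is homogeneous of weight $\left\|\boldsymbol{\lambda}\right\|$, and — since $\lambda^0$ has no ones while the remaining factors carry the classes $x,\alpha_i$ — involves no factor $\kq_1(1)$. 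Expanding the $\km_{\lambda^i,\alpha_i}$ as in Theorem \ref{QinWangTheorem} turns $Q_{\boldsymbol{\lambda}}$ into a fixed (\ie $n$-independent) rational combination of products of creation operators, each of weight $\left\|\boldsymbol{\lambda}\right\|$; the same is done for $\boldsymbol{\alpha}^{\boldsymbol{\mu}}$.

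Next I would apply Theorem \ref{stability} with $s=2$, $Q_1=Q_{\boldsymbol{\lambda}}$, $Q_2=Q_{\boldsymbol{\mu}}$ and $n_1=\left\|\boldsymbol{\lambda}\right\|$, $n_2=\left\|\boldsymbol{\mu}\right\|$, extending it by bilinearity to the above rational combinations (the expansion coefficients being constants, the resulting coefficients remain independent of $n$). This expresses the cup product as a finite sum, with $n$-independent coefficients, of classes
\begin{equation*}
\frac{1}{(n-m)!}\,\kq_{1^{n-m}}(1)\,R\vac,\qquad R=\prod_j\kq_{\mu_j}(\gamma_j),\quad m=\sum_j\left\|\mu_j\right\|\leq\left\|\boldsymbol{\lambda}\right\|+\left\|\boldsymbol{\mu}\right\|.
\end{equation*}

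Finally I would convert each such class back to the $\boldsymbol{\alpha}^{\boldsymbol{\nu}}$-basis and locate the source of the $n$-dependence. Expanding every $\gamma_j$ in the fixed basis $\{\One,\alpha_1,\ldots,\alpha_{22},x\}$ writes $R\vac$ as an $n$-independent combination of monomials in the $\kq_l(\text{basis class})$. A monomial carrying $a$ factors $\kq_1(1)$ has them absorbed into the padding: setting $m':=m-a$ one has $\kq_{1^{n-m}}(1)\kq_1(1)^a=\kq_{1^{n-m'}}(1)$, so the class becomes
\begin{equation*}
\frac{1}{(n-m)!}\,\kq_{1^{n-m'}}(1)\,R'\vac=\frac{(n-m')!}{(n-m)!}\cdot\frac{1}{(n-m')!}\,\kq_{1^{n-m'}}(1)\,R'\vac,
\end{equation*}
where $R'$, of weight $m'$, has no factor $\kq_1(1)$. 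By the computation of the first step run in reverse — re-collecting the $\kq_l(\alpha_i)$ into $\km_{\nu^i,\alpha_i}$ via $\psi,\psi^{-1}$ and re-inserting $z_{\nu^0}$, all $n$-independent — the second factor is exactly a reduced $\boldsymbol{\alpha}^{\boldsymbol{\nu}}$ with $\left\|\boldsymbol{\nu}\right\|=m'$. The only surviving $n$-dependence is the falling factorial $\frac{(n-m')!}{(n-m)!}=(n-m')(n-m'-1)\cdots(n-m+1)$, a polynomial in $n$ of degree $a=m-m'=m-\left\|\boldsymbol{\nu}\right\|$. Summing the contributions to a fixed $\boldsymbol{\nu}$ and using $m\leq\left\|\boldsymbol{\lambda}\right\|+\left\|\boldsymbol{\mu}\right\|$ then gives that $c^{\boldsymbol{\lambda\mu}}_{\boldsymbol{\nu}}$ is a polynomial in $n$ of degree at most $\left\|\boldsymbol{\lambda}\right\|+\left\|\boldsymbol{\mu}\right\|-\left\|\boldsymbol{\nu}\right\|$, as claimed. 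The delicate point, and the main obstacle, is precisely this bookkeeping: one must verify that every change of basis ($\km\leftrightarrow\kq$ and the expansion of the $\gamma_j$) is genuinely independent of $n$, so that merging the $\kq_1(1)$'s with the padding is the unique origin of the polynomial growth.
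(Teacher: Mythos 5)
Your argument is correct and is essentially the paper's own proof: both rewrite the reduced classes in stable form $\frac{1}{(n-\left\|\boldsymbol\lambda\right\|)!\,z_{\lambda^0}}\kq_{1^{n-\left\|\boldsymbol\lambda\right\|}}(1)Q_{\boldsymbol\lambda}\vac$, invoke Theorem \ref{stability} with $s=2$, and identify the sole $n$-dependence as the falling factorial $\frac{(n-\left\|\boldsymbol\nu\right\|)!}{(n-m)!}$ of degree $m-\left\|\boldsymbol\nu\right\|\leq\left\|\boldsymbol\lambda\right\|+\left\|\boldsymbol\mu\right\|-\left\|\boldsymbol\nu\right\|$ arising when surplus $\kq_1(1)$ factors are merged with the padding. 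The only difference is presentational: you spell out the $n$-independence of the base changes $\km\leftrightarrow\kq$ and of the expansion of the $\gamma_j$, which the paper's terser proof leaves implicit.
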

\begin{proof} Set $Q_{\boldsymbol{\lambda}}:=  \kq_{\lambda^0}(1)\kq_{\lambda^{23}}(x)\prod_{1\leq j\leq 22}\kq_{\lambda^j}(\alpha_j)$ and $n_{\boldsymbol\lambda}:=\left\|\boldsymbol\lambda\right\|$. Then we have:
$\boldsymbol{\alpha}^{\boldsymbol{\lambda}} = \frac{1}{(n-n_{\boldsymbol\lambda})!\,z_{\lambda^0}}\kq_{1^{n-n_{\boldsymbol\lambda}}}(1)Q_{\boldsymbol\lambda}\vac$ and $
\boldsymbol{\alpha}^{\boldsymbol{\mu}}=\frac{1}{(n-n_{\boldsymbol\mu})!\,z_{\mu^0}}\kq_{1^{n-n_{\boldsymbol\mu}}}(1)Q_{\boldsymbol\mu}\vac $. 
Thus the coefficient $c^{\boldsymbol{\lambda\mu}}_{\boldsymbol{\nu}}$ in the product expansion is a constant, which depends on $ \left\|\boldsymbol\lambda\right\|$, $\left\|\boldsymbol\mu\right\|$, $\left\|\boldsymbol\nu\right\|$, but not on $n$, multiplied with $\frac{(n-n_{\boldsymbol\nu})!}{(n-m)!}$ for a certain $m\leq n_{\boldsymbol\lambda}+n_{\boldsymbol\mu}$. 
This is a polynomial of degree $m-n_{\boldsymbol\nu}\leq n_{\boldsymbol\lambda}+n_{\boldsymbol\mu}-n_{\boldsymbol\nu} =\left\|\boldsymbol\lambda\right\|+\left\|\boldsymbol\mu\right\|-\left\|\boldsymbol\nu\right\| $.
\end{proof}
\begin{remark}
If $n<\left\|\boldsymbol\lambda\right\|$ or $n<\left\|\boldsymbol\mu\right\| $, one has $\boldsymbol{\alpha}^{\boldsymbol{\lambda}}=0$, resp.~$\boldsymbol{\alpha}^{\boldsymbol{\mu}}=0$. But it is still possible that $\boldsymbol{\alpha}^{\boldsymbol{\nu}}\neq 0$ in $H^*(S\hilb{n})$. It seems that in this case the polynomial $ c^{\boldsymbol{\lambda\mu}}_{\boldsymbol{\nu}}$ always becomes zero when evaluated at $n$. So the $ c^{\boldsymbol{\lambda\mu}}_{\boldsymbol{\nu}}$ seem to be universal in the sense that the above corollary holds true even without the condition $n\geq\left\|\boldsymbol\lambda\right\|,\left\|\boldsymbol\mu\right\| $.
\end{remark}
\begin{example}\label{example} Here are some explicit examples for illustration. See \ref{exampleSource} for how to compute them. 
\begin{enumerate} \item $
\One^{(2,2)}\smile \alpha_i^{(2)} = -2\cdot \One^{(2)}\alpha_i^{(1)}x^{(1)} + \One^{(2,2)}\alpha_i^{(2)} + 2\cdot\One^{(2)}\alpha_i^{(3)} +\alpha_i^{(4)} $ for $i\in\{1..22\}$.
\item Let $i,j\in\{1\ldots 22\}$. 
If $i \neq j$, then $\alpha_i^{(2)}\smile\alpha_j^{(1)} = \alpha_i^{(2)}\alpha_j^{(1)} + 2B(\alpha_i,\alpha_j)\cdot x^{(1)}$. 
Otherwise, $\alpha_i^{(2)}\smile\alpha_i^{(1)} = \alpha_i^{(3)}+ \alpha_i^{(2,1)} + 2B(\alpha_i,\alpha_i)\cdot x^{(1)}$.
\item Set $\boldsymbol{\alpha}^{\boldsymbol{\lambda}} = \One^{(2)}$ and $\boldsymbol{\alpha}^{\boldsymbol{\nu}}=x^{(1)}$. Then $c^{\boldsymbol{\lambda\lambda}}_{\boldsymbol{\nu}} = -(n-1)$.
\item Set $\boldsymbol{\alpha}^{\boldsymbol{\lambda}} = \One^{(2,2)}$ and $\boldsymbol{\alpha}^{\boldsymbol{\nu}}=x^{(1,1)}$. Then $c^{\boldsymbol{\lambda\lambda}}_{\boldsymbol{\nu}} =\frac{(n-3)(n-2)}{2}$.
\end{enumerate}
\end{example}
\begin{example} \label{oddWitness} Let $i,j$ be indices, such that $B(\alpha_i,\alpha_j)=1,\ B(\alpha_i,\alpha_i)=0=B(\alpha_j,\alpha_j)$ and let $k\geq 0$. Set $\boldsymbol{\alpha}^{\boldsymbol{\lambda}} = \alpha_i^{(1)}\alpha_j^{(1)}x^{(1^k)}$ 
and $\boldsymbol{\alpha}^{\boldsymbol{\nu}}= x^{(1^{2k+2})}$. Then $c^{\boldsymbol{\lambda\lambda}}_{\boldsymbol{\nu}} =1$.
\end{example}
\begin{proof}
It is not hard to see from the definition, that for $\beta_j,\;\gamma_j\in H^*(S)$:
$$
\kq_1(\beta_1)\ldots\kq_1(\beta_n)\vac \smile\kq_1(\gamma_1)\ldots\kq_1(\gamma_n)\vac = \sum_{\sigma \in S_n} \kq_1(\beta_1\cdot\gamma_{\sigma(1)})\ldots\kq_1(\beta_n\cdot\gamma_{\sigma(n)})\vac.
$$ 
A combinatorial investigation yields now:
$$
\left(\kq_1(\alpha_i)\kq_1(\alpha_j)\kq_1(x)^k\kq_1(1)^{k+m}\vac \right)^2 = \frac{(k+m)!^2}{m!} \kq_1(x)^{2k+2}\kq_1(1)^m\vac + \text{other terms}.
$$
Looking at \ref{notation}, the result follows.
\end{proof}
\begin{theorem}\label{freeness}
The quotient
$$
 \frac{H^{2k}(S\hilb{n},\IZ)}{\Sym^k H^{2}(S\hilb{n},\IZ)}
$$
is a free $\IZ$-module for $n\geq k+2$.
\end{theorem}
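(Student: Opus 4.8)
The plan is to turn the statement into a divisibility question about a single integer matrix and then to control that matrix through the stability theorem. Using the Qin--Wang basis (Theorem \ref{QinWangTheorem}) I would write the cup-product map $\Sym^k H^2(S\hilb{n},\IZ)\to H^{2k}(S\hilb{n},\IZ)$ as a matrix $M_n$ whose rows are the images of the monomials $\prod_{i=1}^{22}(\alpha_i^{(1)})^{a_i}\smile(\One^{(2)})^{b}$ with $\sum_i a_i+b=k$ (these span $\Sym^k H^2$), expanded in the reduced symbols $\boldsymbol\alpha^{\boldsymbol\nu}$ of degree $2k$. Since the cohomology is torsion-free and purely even, the cokernel is free if and only if $M_n$ has no nonzero invariant factor greater than $1$, equivalently if and only if the greatest common divisor of the maximal minors of $M_n$ is $1$, equivalently if and only if the rank of $M_n$ over $\mathbb{F}_p$ agrees with its rank over $\IQ$ for every prime $p$. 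By Corollary \ref{stabCor} the entries of $M_n$ are polynomials in $n$, so the whole theorem reduces to showing that this minor-gcd specialises to $1$ for all $n\geq k+2$.

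I would then filter $M_n$ by the weight $\|\boldsymbol\nu\|$. By the stability theorem (Theorem \ref{stability}) the top-weight part of $\prod_i(\alpha_i^{(1)})^{a_i}\smile(\One^{(2)})^b$ is the single class $\One^{(2^b)}\prod_i\alpha_i^{(1^{a_i})}$ with coefficient $b!\prod_i a_i!$. This factorial is the divided-power discrepancy between the symmetric algebra and integral cohomology; it already forces torsion into the associated graded ring, so a naive triangular leading-term argument cannot work and the lower-weight terms are essential. The mechanism I would use instead is that each monomial also meets \emph{companion} classes of the same degree obtained by merging equal parts. For the purely ``surface'' monomials this is enough and harmless: the expansion of $(\alpha_i^{(1)})^k$ has coefficient $1$ on $\alpha_i^{(k)}$, so its coefficient vector is primitive and these monomials contribute a unit pivot as soon as the weight-$k$ companions exist, i.e.\ for $n\geq k$. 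The difficulty is concentrated entirely in the diagonal class $\One^{(2)}$, whose powers carry Euler-class contributions through the graph defect of the Lehn--Sorger model (Definition \ref{model}) and hence $n$-dependent lower-weight coefficients.

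The extremal and, I expect, decisive case is the pure power $(\One^{(2)})^k$, whose companions are the diagonal classes $\One^{(\rho)}$ with $\sum_j(\rho_j-1)=k$, ranging in weight from $2k$ (the leading term $\One^{(2^k)}$) down to $k+1$ (the fully merged $\One^{(k+1)}$). The fully merged class $\One^{(k+1)}$ receives a coefficient that is not a unit --- for $k=2$ it equals $3$, which is exactly the origin of the $\IZ/3\IZ$ in the first equation of the main theorem, appearing there because at $n=3=k+1$ the class $\One^{(2,2)}$ has already vanished --- and the torsion is neutralised only once a companion of weight $k+2$ (such as $\One^{(2,2)}$ for $k=2$, or $\One^{(3,2)}$ for $k=3$) becomes available, making the relevant pair of coefficients coprime. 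Since a class $\boldsymbol\alpha^{\boldsymbol\nu}$ vanishes precisely when $\|\boldsymbol\nu\|>n$, these weight-$(k+2)$ companions first appear at $n=k+2$, which is exactly the stated threshold. The main obstacle is thus to prove, uniformly in $k$, that the Lehn--Sorger coefficients of the weight-$(k+1)$ and weight-$(k+2)$ diagonal companions are globally coprime and that the diagonal power is indeed the binding monomial; I expect this to require the explicit lower-order product formulas together with a careful gcd computation of the polynomial minors, rather than the leading-term calculus alone.
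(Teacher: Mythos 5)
Your setup coincides with the paper's: reduce to the basis-completion criterion (a set of integral vectors extends to a basis iff the maximal minors of its coefficient matrix have gcd $1$), observe the unit coefficient of $\alpha_i^{(k)}$ in $(\alpha_i^{(1)})^k$, and locate the whole difficulty in the powers of $\One^{(2)}$, where you correctly single out the fully merged class $\One^{(k_0+1)}$ (weight $k_0+1$) and its companion of type $\One^{(k_0,2)}$ (weight $k_0+2$, explaining the threshold $n\geq k+2$) as the pair whose coefficients must be coprime. But you stop exactly where the proof has to do its real work: you leave the coprimality of these two coefficients, uniformly in $k_0$, as an expected ``main obstacle.'' This is the genuine gap. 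The paper closes it by a concrete combinatorial computation: for these particular products in the Lehn--Sorger model (Definition \ref{model}) the graph defect vanishes and the adjoint map is trivial, so the coefficient of $1^{(k_0+1)}$ in $\left(1^{(2)}\right)^{k_0}$ is the number of ways to write a $(k_0+1)$-cycle as a product of $k_0$ transpositions, which equals $(k_0+1)^{k_0-1}$ by a theorem of D\'enes, while the coefficient of $1^{(k_0,2)}$ is $k_0\cdot k_0^{k_0-2}=k_0^{k_0-1}$; these are coprime simply because $k_0$ and $k_0+1$ are. Without this (or an equivalent closed form), your argument does not yield the theorem for any $k$ beyond the computer-verified range.

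Two smaller corrections. First, your worry about whether the diagonal power is the ``binding monomial,'' and the ensuing ``careful gcd computation of the polynomial minors,'' is unnecessary: the paper treats every mixed monomial $\left(1^{(2)}\right)^{k_0}\prod_i\left(\alpha_i^{(1)}\right)^{k_i}$ on an equal footing and observes that it is the \emph{only} element of the canonical basis of $\Sym^kH^2$ with a nonzero coefficient at its two designated targets $1^{(k_0+1)}\alpha_1^{(k_1)}\cdots\alpha_{22}^{(k_{22})}$ and $1^{(k_0,2)}\alpha_1^{(k_1)}\cdots\alpha_{22}^{(k_{22})}$; the relevant columns therefore decouple row by row, the minors factor into the coprime pairs, and no $n$-dependent analysis of polynomial entries is needed (the dependence on $n$ enters only through which classes of weight $\leq n$ survive, exactly as you noted). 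Second, your description of the top-weight stratum is off: the leading part of $\prod_i(\alpha_i^{(1)})^{a_i}\smile(\One^{(2)})^{b}$ is not the single class $\One^{(2^b)}\prod_i\alpha_i^{(1^{a_i})}$, since all partitions $\nu^i$ of $a_i$ contribute in the same weight --- indeed $\kq_{1^{a_i}}(\alpha_i)=\km_{(a_i),\alpha_i}+\ldots+a_i!\,\km_{(1^{a_i}),\alpha_i}$ puts coefficient $1$ on $\alpha_i^{(a_i)}$. Your structural conclusion survives (for $b\geq 2$ every top-weight coefficient on the $\One$-part carries the factor $b!$, so lower-weight companions are indispensable), but the statement as written is incorrect.
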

\begin{proof}
The idea of the proof is to modify the basis of $H^{2k}(S\hilb{n},\IZ)$, given in Theorem~\ref{QinWangTheorem}, in a way that $\Sym^k H^{2}(S\hilb{n},\IZ)$ splits as a direct summand. 

Given a free $\IZ$-module $M$ with basis $(b_i)_{i=1\ldots m}$ and a vector $v = a_1b_1 + \ldots + a_mb_m$. Then there is another basis of $M$ which contains $v$, iff $\gcd\{a_1,\ldots,a_m\} = 1$. More generally, given a set of vectors $(v_i)_{i=1\ldots r}$, $v_i=a_{i1}b_1+\ldots+a_{im}b_m$, we can complete it to a basis of $M$, iff the $r\times r$-minors of the matrix $(a_{ij})_{ij}$ share no common divisor. We want to show that the canonical basis of $\Sym^k H^{2}(S\hilb{n},\IZ)$ is such a set.

A basis of $ H^{2}(S\hilb{n},\IZ)$ is given by the classes $\alpha_i^{(1)}=\frac{1}{(n-1)!}\kq_{1^{n-1}}(1)\kq_1(\alpha_i)\vac$, $i=1,\ldots ,22$ and $1^{(2)} =\frac{1}{2(n-2)!}\kq_{(2,1^{n-2})}(1)\vac$.
A power of $\alpha_i^{(1)}$ looks like (Thm.~\ref{stability}):
\begin{align*}
\left(\alpha_i^{(1)}\right)^k & =\frac{1}{(n-k)!}\kq_{1^{n-k}}(1)\kq_{1^k}(\alpha_i)\vac + \text{other terms containing } \kq_\lambda(x).
\end{align*}
Now, by the definition of $\psi_{\lambda\mu}$, $\kq_{1^k}(\alpha_i) = \km_{(k),\alpha_i} + \ldots + k! \cdot\km_{(1^k),\alpha_i}$, so
\begin{equation}
\left(\alpha_i^{(1)}\right)^k  = \alpha_i^{(k)} + \text{other terms}.
\end{equation}
Next, we determine the coefficients of $1^{(k+1)}$ and $1^{(k,2)}$ in the expansion of $\left(1^{(2)}\right)^k$. Considering Definition \ref{model}, we observe that here the graph defect is zero and the adjoint map is trivial, so the problem reduces to combinatorics of the symmetric group: the coefficient of $1^{(k+1)}$ is the number of ways to write a $(k+1)$-cycle as a product of $k$ transpositions. A result of D\'enes \cite{Denes} states that this is $(k\!+\!1)^{k-1}$. For the $1^{(k,2)}$-coefficient, we have to choose one transposition, and write a $k$-cyle as a product of the remaining $k-1$ transpositions. The number of possibilities is therefore $k\cdot k^{k-2} = k^{k-1}$. So
\begin{equation}
\left(1^{(2)}\right)^k = (k\!+\! 1)^{k-1} \cdot 1^{(k+1)} \;+\; k^{k-1}\cdot 1^{(k,2)} \;+\; \text{other terms}.
\end{equation}
Note that these two coefficients are coprime. 
Putting the two cases together, one gets for a general element of $\Sym^kH^2(S\hilb{n},\IZ)$, $k=k_0+\ldots+k_{22}$:
\begin{align*}
\left(1^{(2)}\right)^{k_0}\prod_{i=1}^{22}\left(\alpha_i^{(1)}\right)^{k_i} &=  (k_0\!+\! 1)^{k_0-1} \cdot 1^{(k_0+1)}\alpha_1^{(k_1)}\ldots \alpha_{22}^{(k_{22})} \\
&+k_0^{k_0-1}\cdot 1^{(k_0,2)}\alpha_1^{(k_1)}\ldots \alpha_{22}^{(k_{22})} +\text{other terms}.
\end{align*}
One checks, that this is the only element of $\Sym^kH^2(S\hilb{n},\IZ)$ having a nonzero coefficient at $1^{(k_0+1)}\alpha_1^{(k_1)}\ldots \alpha_{22}^{(k_{22})}$ and $1^{(k_0,2)}\alpha_1^{(k_1)}\ldots \alpha_{22}^{(k_{22})}$. Now it is easy to show the existence of a complementary basis.
\end{proof}

\section{Computational results} \label{CompSection}
We now give some results in low degrees, obtained by computing multiplication matrices with respect to the integral basis of $H^*(S\hilb{n},\IZ)$. To get their cokernels, one has to reduce them to Smith normal form. Both results have been obtained using a computer.
\begin{remark}
Denote $h^k(S\hilb{n})$ the rank of $H^k(S\hilb{n},\IZ)$. We have:
\begin{itemize}
\item $h^2(S\hilb{n}) = 23 $ for $n\geq 2$.
\item $h^4(S\hilb{n}) = 276,\; 299,\; 300$ for $n=2,3, \geq 4$ resp.
\item $h^6(S\hilb{n}) = 23,\; 2554,\; 2852,\; 2875,\; 2876$ for $n=2,3,4,5,\geq6$ resp.
\end{itemize}
\end{remark}
The algebra generated by classes of degree 2 is an interesting object to study. For cohomology with complex coefficients, Verbitsky has proven in \cite{Verbitsky} that the cup product mapping from $\Sym^k H^2(S\hilb{n},\IC)$ to $H^{2k}(S\hilb{n},\IC)$ is injective for $k\leq n$. Since there is no torsion, one concludes that this also holds for integral coefficients.
\begin{proposition} We identify $\Sym^2H^2(S\hilb{n},\IZ)$ with its image in $H^4(S\hilb{n},\IZ)$ under the cup product mapping. Then: 
\setcounter{equation}{0} 
\begin{align}
\frac{H^4(S\hilb{2},\IZ)}{\Sym^2 H^2(S\hilb{2},\IZ)} & \cong \left(\frac{\IZ}{2\IZ}\right)^{\oplus 23} \oplus \frac{\IZ}{5\IZ},\\
\label{sym23}
\frac{H^4(S\hilb{3},\IZ)}{\Sym^2 H^2(S\hilb{3},\IZ)} & \cong \frac{\IZ}{3\IZ} \oplus \IZ^ {\oplus 23}, \\
\frac{H^4(S\hilb{n},\IZ)}{\Sym^2 H^2(S\hilb{n},\IZ)} & \cong  \IZ^ {\oplus 24}, \quad \text{for }n\geq 4.
\end{align}
The 3-torsion part in (\ref{sym23}) is generated by the integral class $1^{(3)}$.
\end{proposition}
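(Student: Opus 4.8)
The statement is computational, so the plan is to realise the cup-product map $\mu\colon\Sym^2 H^2(S\hilb{n},\IZ)\to H^4(S\hilb{n},\IZ)$ as an explicit integer matrix in the Qin--Wang integral bases (Theorem~\ref{QinWangTheorem}) and to read off each cokernel from its Smith normal form. First I would fix the basis $1^{(2)},\alpha_1^{(1)},\dots,\alpha_{22}^{(1)}$ of $H^2(S\hilb{n},\IZ)$, so that $\Sym^2 H^2$ is free of rank $\binom{24}{2}=276$ with basis the products $(1^{(2)})^2$, $1^{(2)}\alpha_i^{(1)}$ and $\alpha_i^{(1)}\alpha_j^{(1)}$ ($i\le j$). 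Next I would enumerate the reduced symbols $\boldsymbol\alpha^{\boldsymbol\lambda}$ of degree $4$: the types $\alpha_i^{(2)}$, $\alpha_i^{(1,1)}$, $1^{(2)}\alpha_i^{(1)}$ ($22$ each), $\alpha_i^{(1)}\alpha_j^{(1)}$ with $i<j$ ($231$), and $1^{(3)}$, $1^{(2,2)}$, $x^{(1)}$ ($1$ each) account for $300$ classes, those of weight $\left\|\boldsymbol\lambda\right\|>n$ being zero (Notation~\ref{notation}). Hence $\dim H^4=300$ for $n\ge 4$; the weight-$4$ symbol $1^{(2,2)}$ drops out at $n=3$, leaving $299$; and additionally $1^{(3)}$ together with the $22$ symbols $1^{(2)}\alpha_i^{(1)}$ (all of weight $3$) drop out at $n=2$, leaving $276$. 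Each matrix entry is then obtained by expanding the corresponding product of $\kq$-monomials in the Lehn--Sorger model (Theorem~\ref{LSThm}) and re-expressing the result in the $\km_{\nu,\alpha}$-basis through the inverse base-change coefficients $\psi_{\nu\rho}^{-1}$.

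The case $n\ge 4$ I would settle without any normal-form computation. The cup-product map is injective for $k=2\le n$ (over $\IC$, hence over $\IZ$ since there is no torsion, as recalled above), so its image has full rank $276$; by Theorem~\ref{freeness} with $k=2$ the quotient is a free $\IZ$-module once $n\ge k+2=4$; and as $\dim H^4=300$ the cokernel is forced to be $\IZ^{\oplus 24}$. For $n=2$ and $n=3$ the freeness theorem does not apply and an honest Smith normal form is unavoidable. For $n=2$ the map is square ($276\to276$) and injective, so the cokernel is finite; the normal form gives $(\IZ/2\IZ)^{\oplus 23}\oplus\IZ/5\IZ$, of order $2^{23}\cdot 5$. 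For $n=3$ the map $276\to299$ is injective, so the free part of the cokernel has rank $299-276=23$, and the normal form exhibits a single torsion factor $\IZ/3\IZ$.

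To identify the generator of the $3$-torsion in (\ref{sym23}), I would feed the explicit expansion from the proof of Theorem~\ref{freeness} into the normal form: for $k=2$ it reads $(1^{(2)})^2=3\cdot1^{(3)}+2\cdot1^{(2,2)}+(\text{a multiple of }x^{(1)})$, which on $S\hilb{3}$ becomes $(1^{(2)})^2=3\cdot1^{(3)}-2\,x^{(1)}$, since $1^{(2,2)}=0$ and the $x^{(1)}$-coefficient is $-(n-1)=-2$ by Example~\ref{example}(3). Among the $276$ generators of $\Sym^2 H^2$, only $(1^{(2)})^2$ has a nonzero $1^{(3)}$-coefficient, and that coefficient is $3$: any other generator involves a factor $\alpha_i^{(1)}$, and since the cup product on $H^\ast(S)$ cannot create a degree-zero class, such a product has no component along the pure symbol $1^{(3)}$. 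Reducing the $1^{(3)}$-coordinate modulo $3$ thus yields a well-defined surjection from the cokernel onto $\IZ/3\IZ$ sending $1^{(3)}\mapsto 1$; since the computation also shows $x^{(1)}$ lies in the image (so that $3\cdot1^{(3)}$ dies in the quotient), the class $1^{(3)}$ has order exactly $3$ and generates the torsion.

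The obstacle here is computational rather than conceptual. The matrices are large ($300\times276$, and comparably sized for $n=2,3$), so everything rests on a correct and efficient implementation of the Lehn--Sorger multiplication $m_{\pi,\tau}$ — in particular the graph-defect exponents and the adjoint maps of Definition~\ref{comult} — together with the integral change of basis $\psi_{\nu\rho}^{-1}$, before a Smith normal form can be trusted. The abstract reductions above are what keep the unavoidable machine computation confined to the two small cases $n=2$ and $n=3$.
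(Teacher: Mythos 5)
Your overall route is exactly the paper's: the paper proves this proposition purely by computing the multiplication matrices with respect to the Qin--Wang integral basis (via the Lehn--Sorger model and the base change $\psi^{-1}_{\nu\rho}$) and reducing to Smith normal form on a computer, and your rank bookkeeping ($276$, $299$, $300$) and the structural shortcuts for $n\geq 4$ (Verbitsky's injectivity plus Theorem~\ref{freeness} with $k=2$) are all drawn from the surrounding text of the paper itself.

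One concrete error in your hand-computed part, though: the identity $(1^{(2)})^2 = 3\cdot 1^{(3)} - 2\,x^{(1)}$ on $S\hilb{3}$ is false, because you silently converted the ``other terms'' of the freeness proof into ``a multiple of $x^{(1)}$''. In the Lehn--Sorger product, the terms where the two transpositions coincide contribute $\Delta(\One)$ placed on the identity permutation, and by Definition~\ref{comult} one computes $\Delta(\One) = -\One\otimes x - x\otimes\One - \sum_{i,j}(B^{-1})_{ij}\,\alpha_i\otimes\alpha_j$; the $\alpha\otimes\alpha$ part is nonzero since $B$ restricted to $H^2(S,\IZ)$ is unimodular and nondegenerate. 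Hence $(1^{(2)})^2 = 3\cdot 1^{(3)} - 2\,x^{(1)} - \sum_{i,j}(B^{-1})_{ij}\cdot(\alpha_i\alpha_j\text{-symbols})$ on $S\hilb{3}$. This does not damage your mod-$3$ surjection: your key observation stands, namely that among products of degree-$2$ basis classes only $(1^{(2)})^2$ has a nonzero $1^{(3)}$-coordinate (a $3$-cycle only arises from two overlapping transpositions, and the coefficient is $(k+1)^{k-1}=3$ by D\'enes), and the extra $\alpha$-symbols have zero $1^{(3)}$-coordinate. But it does break the literal inference that ``$x^{(1)}$ lies in the image, so $3\cdot 1^{(3)}$ dies in the quotient'': with the corrected identity you must also dispose of the $\alpha$-symbols, e.g.\ by noting that $\alpha_i^{(1)}\smile\alpha_j^{(1)}$ equals the symbol $\alpha_i^{(1)}\alpha_j^{(1)}$ (resp.\ $\alpha_i^{(2)}+2\alpha_i^{(1,1)}$ for $i=j$) plus $B(\alpha_i,\alpha_j)\,x^{(1)}$, so that modulo the image each such symbol is congruent to a multiple of $x^{(1)}$, after which your argument goes through. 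In the paper this identification of the generator is simply read off from the machine computation, so your repaired hand argument is a small bonus rather than a divergence in method.
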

\begin{remark}
The torsion in the case $n=2$ was also computed by Boissi\`{e}re, Nieper-Wi\ss kirchen and Sarti, \cite[Prop. 3]{BNS} using similar techniques.
For all the author knows, the result for $n=3$ is new.
The freeness result for $n\geq 4$ was already proven by Markman, \cite[Thm. 1.10]{Markman2}, using a completely different method. 
\end{remark}
\begin{proposition} For triple products of $H^2(S\hilb{n},\IZ)$, we have:
$$
\frac{H^6(S\hilb{2},\IZ)}{\Sym^3 H^2(S\hilb{2},\IZ)} \cong 
\frac{\IZ}{2\IZ}.
$$
The quotient is generated by the integral class $x^{(2)}$. Moreover,
$$
\frac{H^6(S\hilb{3},\IZ)}{\Sym^3 H^2(S\hilb{3},\IZ)} \cong  \left(\frac{\IZ}{2\IZ}\right)^{\oplus 230}\oplus \left(\frac{\IZ}{36\IZ}\right)^{\oplus 22}\oplus \frac{\IZ}{72\IZ} \oplus \IZ^{\oplus 254},
$$
$$
\frac{H^6(S\hilb{4},\IZ)}{\Sym^3 H^2(S\hilb{4},\IZ)} \cong  \frac{\IZ}{2\IZ} \oplus \IZ^{\oplus 552}.
$$
For $n\geq 5$, the quotient is free by Theorem \ref{freeness}.
\end{proposition}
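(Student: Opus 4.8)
The proof is computational, and I would organize it around the integral basis of Theorem~\ref{QinWangTheorem} together with the multiplication supplied by the Lehn--Sorger model. As in the proof of Theorem~\ref{freeness}, the module $H^2(S\hilb{n},\IZ)$ is free of rank $23$ with basis $\alpha_1^{(1)},\ldots,\alpha_{22}^{(1)},1^{(2)}$, so $\Sym^3H^2(S\hilb{n},\IZ)$ is free of rank $\binom{25}{3}=2300$ with the canonical basis of degree-three monomials in these generators. The plan is to build the integer matrix $M$ of the cup-product map $\Sym^3H^2\to H^6$: for each monomial I would compute the corresponding triple product in $H^6(S\hilb{n},\IZ)$ and record its coordinates in the Qin--Wang basis of $H^6$. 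The cokernel is then read off from the Smith normal form of $M$, as the sum of the $\IZ/d_i\IZ$ over elementary divisors $d_i\neq 1$ together with a free part of rank $h^6(S\hilb{n})-\rank M$.

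The products themselves are computed by expressing each generator as a Nakajima class, multiplying in the ring $A\hilb{n}$ of Theorem~\ref{LSThm}, and converting the result back into the integral $\km$-basis via the base-change matrices $\psi$, $\psi^{-1}$ of Definition~\ref{SymFun}. Corollary~\ref{stabCor} tells us that, for fixed reduced symbols, the structure constants are polynomials in $n$, so in principle one computes the products once and specializes to $n=2,3,4$. Verbitsky's injectivity (quoted just before the statement) applies for $k=3\leq n$, so for $n\geq 3$ the map is injective, $\rank M=2300$, and the free rank of the cokernel equals $h^6(S\hilb{n})-2300$; this gives $254$ for $n=3$ and $552$ for $n=4$, consistent with $h^6=2554,2852$. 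For $n=2$ the target has rank only $23$ and the map is very far from injective, so here I would simply assemble $M$ directly.

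To pin down the torsion and its generators I would track which basis vectors survive in the Smith normal form. For $n=2$ one shows that the image of $\Sym^3H^2$ has index $2$ in $H^6(S\hilb{2},\IZ)$ and that the single nontrivial elementary divisor is carried by $x^{(2)}$; because the target is small this can be checked directly, verifying that every class in the image has even coordinate on $x^{(2)}$, while the image together with $x^{(2)}$ spans $H^6(S\hilb{2},\IZ)$. For $n=3$ and $n=4$ the torsion groups $(\IZ/2\IZ)^{230}\oplus(\IZ/36\IZ)^{22}\oplus\IZ/72\IZ$ and $\IZ/2\IZ$ are read off from the elementary divisors of $M$. Finally, the case $n\geq 5$ is immediate from Theorem~\ref{freeness}, since here $k=3$ and $n\geq k+2$.

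The main obstacle is the size of the computation: with $\Sym^3H^2$ of rank $2300$ and, for instance, $H^6(S\hilb{3})$ of rank $2554$, the matrix $M$ and its Smith normal form are far beyond hand computation, so the essential work is to turn the Lehn--Sorger multiplication and the $\kq\leftrightarrow\km$ base change into an effective algorithm---which is what the accompanying program does. Extracting the exact elementary divisors (for instance distinguishing the $\IZ/36\IZ$ and $\IZ/72\IZ$ summands for $n=3$) is the delicate part, since it depends on the full integral structure rather than merely on ranks over $\IQ$.
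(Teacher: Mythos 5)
Your proposal is correct and takes essentially the same route as the paper: the paper's proof is precisely the computational scheme you describe, namely forming the multiplication matrices with respect to the Qin--Wang integral basis (via the Lehn--Sorger model implemented in the accompanying program), reducing them to Smith normal form to read off the elementary divisors and free rank, and invoking Theorem \ref{freeness} for $n\geq 5$. Your consistency checks via Verbitsky's injectivity (free rank $h^6(S\hilb{n})-\binom{25}{3}$, giving $254$ and $552$ for $n=3,4$) agree with the paper's stated numbers.
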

We study now cup products between classes of degree 2 and 4. The case of $S\hilb{3}$ is of particular interest.
\begin{proposition} \label{p24}The cup product mapping $ : H^2(S\hilb{n},\IZ)\otimes H^4(S\hilb{n},\IZ) \rightarrow H^6(S\hilb{n},\IZ) $ is neither injective (unless $n=0$) nor surjective (unless $n\leq 2$). We have:
\setcounter{equation}{0} 
\begin{align} 
\frac{H^6(S\hilb{3},\IZ)}{H^2(S\hilb{3},\IZ)\smile H^4(S\hilb{3},\IZ)} &\cong \left(\frac{\IZ}{3\IZ}\right)^{\oplus 22} \oplus \frac{\IZ}{3\IZ},
\\
\frac{H^6(S\hilb{4},\IZ)}{H^2(S\hilb{4},\IZ)\smile H^4(S\hilb{4},\IZ)} &\cong  \left(\frac{\IZ}{6\IZ}\right)^{\oplus 22}\oplus\frac{\IZ}{108\IZ} \oplus\frac{\IZ}{2\IZ} ,
\\
\frac{H^6(S\hilb{5},\IZ)}{H^2(S\hilb{5},\IZ)\smile H^4(S\hilb{5},\IZ)} &\cong 
 \IZ^{\oplus 22} \oplus \IZ,
\\
\frac{H^6(S\hilb{n},\IZ)}{H^2(S\hilb{n},\IZ)\smile H^4(S\hilb{n},\IZ)} &\cong 
 \IZ^{\oplus 22} \oplus \IZ\oplus\IZ, \ n\geq 6.
\end{align}
In each case, the first 22 factors of the quotient are generated by the integral classes 
 $$
\alpha_i^{(1,1,1)} -3\cdot \alpha_i^{(2,1)} + 3\cdot \alpha_i^{(3)}+ 3 \cdot \One^{(2)}\alpha_i^{(1,1)} -6\cdot \One^{(2)}\alpha_i^{(2)}+6\cdot \One^{(2,2)}\alpha_i^{(1)}-3\cdot \One^{(3)}\alpha_i^{(1)},
$$ 
for $ i=1\ldots 22$. Now define an integral class
\begin{align*}
K:=&\;\sum_{i\neq j} B(\alpha_i,\alpha_j)\left[\alpha_i^{(1,1)}\alpha_j^{(1)} - 2\cdot\alpha_i^{(2)}\alpha_j^{(1)}+\frac{3}{2}\cdot \One^{(2)}\alpha_i^{(1)}\alpha_j^{(1)} \right] +\\
+&\;\sum_{i}B(\alpha_i,\alpha_i)\left[\alpha_i^{(1,1,1)} - 2\cdot\alpha_i^{(2,1)} + \frac{3}{2}\cdot \One^{(2)}\alpha_i^{(1,1)} \right]+  x^{(2)}-\One^{(2)}x^{(1)}.
\end{align*} 
In the case $n=3$, the last factor of the quotient is generated by $K$. 
\\In the case $n=4$, the class $ \One^{(4)}$ generates the 2-torsion factor and $K-38\cdot\One^{(4)}$ generates the 108-torsion factor.
\\In the case $n=5$, the last factor of the quotient is generated by $K - 16\cdot \One^{(4)} + 21\cdot \One^{(3,2)}$.\\
If $n\geq 6$, the two last factor of the quotient are generated over the rationals by $K +\frac{4}{3}(45-n)\One^{(2,2,2)} - (48-n)\One^{(3,2)}$ and $K+\frac{1}{2}(40-n)\One^{(2,2,2)}- \frac{1}{4}(48-n)\One^{(4)}$. Over $\IZ$, one has to take appropriate multiples depending on $n$, such that the coefficients become integral numbers.
\end{proposition}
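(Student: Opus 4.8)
The plan is to reduce the statement to a finite integer linear-algebra computation for each value of $n$, relying on the three structural inputs assembled in the Preliminaries: the Qin--Wang integral basis (Theorem~\ref{QinWangTheorem}, in the form of Notation~\ref{notation}), the Lehn--Sorger ring model (Definition~\ref{model} and Theorem~\ref{LSThm}), and the stability statement (Theorem~\ref{stability} with Corollary~\ref{stabCor}) to treat all large $n$ uniformly. The non-injectivity claim is immediate from a rank count: for every $n\geq 1$ one has $h^2(S\hilb{n})\cdot h^4(S\hilb{n}) > h^6(S\hilb{n})$ by the ranks listed in the preceding Remark, so the source of the tensored cup-product map has strictly larger rank than the target, whereas for $n=0$ the source vanishes. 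Non-surjectivity for $n\geq 3$ will follow from the nonvanishing of the cokernels computed below; the cases $n\leq 2$ are handled directly, the target $H^6$ being zero for $n\leq 1$ and surjectivity for $n=2$ being checked by the same matrix method.

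For the core computation, with $n$ fixed, I would first enumerate the integral bases of $H^2$, $H^4$ and $H^6$: by Lemma~\ref{degBound} only finitely many reduced symbols $\boldsymbol\alpha^{\boldsymbol\lambda}$ with $\left\|\boldsymbol\lambda\right\|$ in the admissible range occur in each degree, and Notation~\ref{notation} makes them explicit. Next, for each pair $(\boldsymbol\alpha^{\boldsymbol\lambda},\boldsymbol\alpha^{\boldsymbol\mu})$ of degrees $2$ and $4$, I would compute $\boldsymbol\alpha^{\boldsymbol\lambda}\smile\boldsymbol\alpha^{\boldsymbol\mu}$ using the Lehn--Sorger multiplication $m_{\pi,\tau}$, translating between Nakajima monomials $\kq_\lambda(\beta)$ and elements of $A\{S_n\}$ via Theorem~\ref{LSThm}, and then re-expressing the answer in the Qin--Wang basis. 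The delicate point is integrality: the classes $\km_{\nu,\alpha}=\sum_\rho\psi^{-1}_{\nu\rho}\kq_\rho(\alpha)$ involve the non-integral inverse matrix $(\psi^{-1}_{\nu\rho})$, so one must carry the change of basis between the $\kq_\lambda$-monomials and the symbols $\boldsymbol\alpha^{\boldsymbol\lambda}$ exactly over $\IQ$ and confirm at the end that the products land in the $\IZ$-lattice spanned by the basis. Assembling these products as the columns of an integer matrix $M$ gives the cup-product map $H^2\otimes H^4\to H^6$ in the chosen integral bases.

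The cokernel is then read off from a Smith normal form $D=UMV$ with $U,V$ unimodular over $\IZ$: the invariant factors $d_i$ produce a cokernel $\bigoplus_i\IZ/d_i\IZ$, reproducing the stated torsion for $n=3,4,5$ and the free part for $n\geq 5$. To exhibit the explicit generators (the displayed $\alpha_i$-classes, the class $K$, and the various $\One^{(\cdots)}$ corrections), I would transport the standard basis of $H^6$ through the unimodular matrix $U$: the rows of $U$ corresponding to the nontrivial invariant factors single out the integral combinations generating each torsion or free summand, and one checks directly that these agree with the stated classes and carry the asserted orders. This also confirms, for $n=4$, the splitting of $K$ against $\One^{(4)}$ into the $2$-torsion and $108$-torsion pieces.

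For $n\geq 6$ the computation is made uniform by Corollary~\ref{stabCor}: the structure constants $c^{\boldsymbol{\lambda\mu}}_{\boldsymbol\nu}$ are polynomials in $n$ and the ranks $h^k$ have stabilized, so $M=M(n)$ depends polynomially on $n$. Performing the Smith reduction symbolically in $n$ yields the constant invariant factors (here all trivial, giving a free cokernel of rank $24$) together with generators whose coefficients are the displayed rational polynomials in $n$; clearing denominators produces integral generators, which accounts for the final remark about taking appropriate $n$-dependent multiples. The main obstacle throughout is computational rather than conceptual: the rank of $H^6$ runs into the thousands, so both the exact rational expansion of the products and the Smith normal form of the resulting large integer matrices must be carried out by machine, and the symbolic reduction over $\IZ[n]$ in the stable range requires care so that the pivot choices remain valid for all $n\geq 6$ simultaneously. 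This is precisely where the accompanying computer program is invoked.
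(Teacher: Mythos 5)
Your proposal is correct and follows essentially the same route as the paper: compute the multiplication matrix $H^2\otimes H^4\to H^6$ in the Qin--Wang integral basis by machine via the Lehn--Sorger model, read off the cokernel and explicit generators from a Smith normal form, and use Lemma~\ref{degBound} with Corollary~\ref{stabCor} to bound the structure constants as polynomials in $n$ of degree at most $2+4-2=4$, so that finitely many instances settle all $n\geq 6$. The only immaterial deviation is that you propose a symbolic Smith reduction over $\IZ[n]$, whereas the paper determines the polynomial entries (which turn out to have degree at most $1$) by computing finitely many values of $n$ and then concludes.
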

\begin{proof} The last assertion for arbitrary $n$ follows from Corollary \ref{stabCor}. First observe that for $\boldsymbol{\alpha}^{\boldsymbol{\lambda}}\!\in\! H^2,\  \boldsymbol{\alpha}^{\boldsymbol{\mu}}\!\in\! H^4,\  \boldsymbol{\alpha}^{\boldsymbol{\nu}}\!\in\! H^6 $, we have $\left\| \boldsymbol\lambda\right\| \leq 2$, $\left\| \boldsymbol\mu\right\| \leq 4$ and $\left\| \boldsymbol\nu\right\| \geq 2,$ according to Lemma \ref{degBound}.
The coefficients of the cup product matrix are thus polynomials of degree at most $2+4-2 =4$ and it suffices to compute only a finite number of instances for $n$. It turns out that the maximal degree is $1$ and the cokernel of the multiplication map is given as stated.
\end{proof}

In what follows, we compare some well-known facts about Hilbert schemes of points on K3 surfaces with our numerical calculations. This means, we have some tests that may justify the correctness of our computer program.
We state now computational results for the middle cohomology group. Since $S\hilb{n}$ is a projective variety of complex dimension $2n$, Poincar\'{e} duality gives $H^{2n}(S\hilb{n},\IZ)$ the structure of a unimodular lattice.  
\begin{proposition} Let $L$ denote the unimodular lattice $H^{2n}(S\hilb{n},\IZ)$. We have:
\begin{enumerate}
\item For $n=2$, $L$ is an odd lattice of rank $276$ and signature $156$.
\item For $n=3$, $L$ is an even lattice of rank $2554$ and signature $-1152$.
\item For $n=4$, $L$ is an odd lattice of rank $19298$ and signature $7082$.
\end{enumerate}
For $n$ even, $L$ is always odd. 
\end{proposition}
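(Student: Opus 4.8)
The plan is to pin down the three isometry invariants of the unimodular lattice $L=H^{2n}(S\hilb{n},\IZ)$ — rank, signature and parity — treating each with the lightest available tool and calling on the computer only where that is unavoidable. \textbf{Rank.} The rank of $L$ equals the Betti number $h^{2n}(S\hilb{n})$, which I would read off either from G\"ottsche's product formula for the Poincar\'e polynomial or, in the spirit of this paper, by counting the Qin--Wang basis vectors of Theorem \ref{QinWangTheorem} that sit in degree $2n$; both give $276,2554,19298$ for $n=2,3,4$.

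\textbf{Signature.} By Poincar\'e duality the signature of $L$ is the topological signature of $S\hilb{n}$, and since $S\hilb{n}$ is K\"ahler of even complex dimension $2n$, the Hodge index theorem gives $\sigma=\sum_{p,q}(-1)^q h^{p,q}(S\hilb{n})=P(S\hilb{n};1,-1)$, the value at $(x,y)=(1,-1)$ of the Hodge polynomial $P=\sum_{p,q}h^{p,q}x^py^q$. Because the only nonzero Hodge numbers of a K3 surface occur in even total degree, the G\"ottsche--Soergel formula for $\sum_n P(S\hilb{n};x,y)t^n$ collapses under this specialization to
\begin{equation*}
\sum_{n\geq 0}\sigma(S\hilb{n})\,t^n=\prod_{m\geq 1}\bigl(1-(-1)^{m-1}t^m\bigr)^{-4}\bigl(1+(-1)^{m-1}t^m\bigr)^{-20}.
\end{equation*}
Expanding to order $t^4$ yields the coefficients $-16,156,-1152,7082$, confirming the asserted signatures for $n=2,3,4$ independently of the cup-product program and giving a useful check on it.

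\textbf{Parity.} A unimodular lattice is even precisely when $B(v,v)$ is even for every vector of a basis. For \emph{even} $n$ I would settle oddness uniformly by exhibiting one class of odd self-intersection. Writing $n=2k+2$ and taking indices $i,j$ as in Example \ref{oddWitness} (such a hyperbolic pair exists because $H^2(S,\IZ)$ contains a copy of $U$), the class $v=\alpha_i^{(1)}\alpha_j^{(1)}x^{(1^k)}$ has cohomological degree $4+4k=2n$, hence lies in $L$; by Example \ref{oddWitness} its square has coefficient $1$ on $x^{(1^n)}$. As $x^{(1^n)}$ is a Qin--Wang basis vector of the rank-one module $H^{4n}(S\hilb{n},\IZ)$, it generates it, so $\int_{S\hilb{n}}x^{(1^n)}=\pm1$ and $B(v,v)=\pm1$ is odd. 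This proves oddness for all even $n$, in particular for $n=2$ and $n=4$; consistently $156\equiv4$ and $7082\equiv2\pmod 8$, so by the classification theorem of \cite{milnor1973symmetric} no even unimodular lattice can carry these signatures.

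The genuinely computational step — and the one I expect to be the main obstacle — is \emph{evenness} for $n=3$. Here $\sigma=-1152\equiv0\pmod 8$, so the mod-$8$ obstruction is silent and oddness cannot be excluded a priori; one must verify that $B(a,a)$ is even for every one of the $2554$ basis vectors $a\in H^6(S\hilb{3},\IZ)$. Conceptually this is the vanishing of the top Wu class of $S\hilb{3}$, since Wu's formula gives $B(a,a)\equiv\int_{S\hilb{3}} v_6\smile a\pmod 2$; but I see no shortcut avoiding a direct check. I would therefore have the program assemble the full Gram matrix of the intersection form — each entry being the coefficient of the point class $x^{(1^3)}$ in the corresponding cup product — verify that all diagonal entries are even, and confirm that the signature extracted from this matrix agrees with the $-1152$ found above, which closes the argument.
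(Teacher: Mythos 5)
Your proposal is correct, and it agrees with the paper's proof in two of its three ingredients: the oddness of $L$ for even $n$ via the norm-$\pm 1$ vector of Example \ref{oddWitness} is precisely the argument the paper gives (your elaboration that $x^{(1^n)}$ generates $H^{4n}(S\hilb{n},\IZ)\cong\IZ$, so the coefficient $1$ in the example really is $B(v,v)=\pm 1$, fills in what the paper leaves implicit), and the evenness for $n=3$ is, in both treatments, an irreducibly computational check of the diagonal of the Gram matrix --- the paper simply says the numerical results come from an explicit calculation, which is exactly your proposed verification. Where you genuinely diverge is the signature: the paper reads it off from the computed Gram matrices and only remarks that one could instead use Hirzebruch's signature theorem, with Pontryagin numbers derived from the Chern numbers of Ellingsrud--G\"ottsche--Lehn \cite{EGL}; you instead specialize the G\"ottsche--Soergel Hodge-number formula at $(x,y)=(1,-1)$, which is legitimate since $S\hilb{n}$ is projective of even complex dimension $2n$, so the Hodge index theorem gives $\sigma=\sum_{p,q}(-1)^q h^{p,q}$, and since a K3 surface has no odd cohomology the product collapses exactly as you state. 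I verified your expansion: $\prod_{m\geq 1}\bigl(1-(-1)^{m-1}t^m\bigr)^{-4}\bigl(1+(-1)^{m-1}t^m\bigr)^{-20} = 1-16t+156t^2-1152t^3+7082t^4+O(t^5)$, matching the asserted signatures (and $\sigma(S)=-16$ at $t^1$). This buys a closed-form, hand-checkable derivation of the signatures that is independent of the cup-product program --- hence an additional correctness test for it, in the same spirit as but more explicit than the paper's L-genus remark. Your observation that $156\equiv 4$ and $7082\equiv 2\pmod 8$ excludes evenness for $n=2,4$ a priori via the classification theorem is a nice cross-check absent from the paper; note only that the ranks $276$, $2554$, $19298$ and the $n=3$ evenness still rest on the same computational apparatus in both proofs, so your argument does not eliminate the computer, it merely corroborates it where theory permits.
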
 
\begin{proof}The numerical results come from an explicit calculation. For $n$ even, we always have the norm-1-vector given by Example \ref{oddWitness}, so $L$ is odd. To obtain the signature, we could equivalently use Hirzebruch's signature theorem and compute the L-genus of $S\hilb{n}$. For the signature, we need nothing but the Pontryagin numbers, which can be derived from the Chern numbers of $S\hilb{n}$. These in turn are known by Ellingsrud, G\"ottsche and Lehn, \cite[Rem. 5.5]{EGL}. 
\end{proof}
Another test is to compute the lattice structure of $H^2(S\hilb{2},\IZ)$, with bilinear form given by $(a,b)\longmapsto \int \left(a\smile b\smile \One^{(2)}\smile \One^{(2)}\right)$. The signature of this lattice is $17$, as shown by Boissi\`ere, Nieper-Wi{\ss}kirchen and Sarti \cite[Lemma 6.9]{BNS}.

\lstset{
  language={Haskell},
  basicstyle=\tiny,
  tabsize=2,
  basewidth=0.53em
}
\appendix
\section{Source Code}
We give the source code for our computer program. It is available online under \url{https://github.com/s--kapfer/HilbK3}. We used the language Haskell, compiled with the \textsc{GHC} software, version 7.6.3. We make use of two external packages: \textsc{permutation} and \textsc{MemoTrie}. The project is divided into 4 modules. 

\subsection{How to use the code}
The main module is in the file \verb|HilbK3.hs|, which can be opened by \textsc{GHCI} for interactive use. It provides an implementation of the ring structure of $A\hilb{n}=H^*(S\hilb{n},\IQ)$, for all $n\in\mathbb{N}$. 
It computes cup--products in reasonable time up to $n=8$.
A product of Nakajima operators is represented by a pair consisting of a partition of length $k$ and a list of the same length, filled with indices for the basis elements of $H^*(S)$. For example, the class
$$
\kq_{3}(\alpha_6)\kq_{3}(\alpha_7)\kq_2(x)\kq_{1}(\alpha_2)\kq_{1}(1)^2 \vac 
$$
in $H^{20}(S\hilb{11})$ is written as
\begin{verbatim}
*HilbK3> (PartLambda [3,3,2,1,1,1], [6,7,23,2,0,0]) :: AnBase 
\end{verbatim}
Note that the classes $1\in H^0(S)$ and $x\in H^4(S)$ have indices \verb|0| and \verb|23| in the code.
The multiplication in $A\hilb{n}$ is implemented by the method \verb|multAn|. 

The classes from Theorem \ref{QinWangTheorem} are represented in the same format, as shown in the following example. The multiplication in $H^*(S\hilb{n},\IZ)$ of such classes is implemented by the method \verb|cupInt|.
\begin{example}\label{exampleSource} We want to compute the results from Example \ref{example}. We only do one particular instance for every example, since the others are similar. By Corollary \ref{stabCor}, it suffices to know the values for finitely many $n$ to deduce the general case.
\begin{enumerate}
\item We do the case $n=6,\ i=1$. 
\begin{Verbatim}[fontsize=\small]
*HilbK3> let i = 1 :: Int
*HilbK3> let x = (PartLambda [2,2,1,1], [0,0,0,0]) :: AnBase
*HilbK3> let y = (PartLambda [2,1,1,1,1], [i,0,0,0,0]) :: AnBase
*HilbK3> cupInt x y
[(([2-1-1-1-1],[0,23,1,0,0]),-2),(([2-2-2],[1,0,0]),1),
(([3-2-1],[1,0,0]),2),(([4-1-1],[1,0,0]),1)]
\end{Verbatim}
\item We do the case $n=4,\ i=j=1$. 
\begin{Verbatim}[fontsize=\small]
*HilbK3> let i = 1 :: Int; let j = 1 :: Int
*HilbK3> let x = (PartLambda [2,1,1], [i,0,0]) :: AnBase
*HilbK3> let y = (PartLambda [1,1,1,1], [j,0,0,0]) :: AnBase
*HilbK3> cupInt x y
[(([2-1-1],[1,1,0]),1),(([3-1],[1,0]),1)]
\end{Verbatim}
\item We do the case $n=4$. 
\begin{Verbatim}[fontsize=\small]
*HilbK3> let d = (PartLambda [2,1,1], [0,0,0]) :: AnBase
*HilbK3> let y = (PartLambda [1,1,1,1], [23,0,0,0]) :: AnBase
*HilbK3> [ t | t <- cupInt d d, fst t == y]
[(([1-1-1-1],[23,0,0,0]),-3)]
\end{Verbatim}
\item We do the case $n=5$. 
\begin{Verbatim}[fontsize=\small]
*HilbK3> let x = (PartLambda [2,2,1], [0,0,0]) :: AnBase
*HilbK3> let y = (PartLambda [1,1,1,1,1], [23,23,0,0,0]) :: AnBase
*HilbK3> [ t | t <- cupInt x x, fst t == y]
[(([1-1-1-1-1],[23,23,0,0,0]),3)]
\end{Verbatim}
\end{enumerate}
\end{example}

\subsection{What the code does}
The goal is to multiply two elements in $H^*(S\hilb{n},\IZ)$. To do this, one has to execute the following steps:
\begin{enumerate}
 \item Compute the base change matrices $\psi_{\rho\nu}$ and $\psi_{\nu\rho}^{-1}$ between monomial and power sum symmetric functions.
 \item Provide a basis and the ring structure of $A=H^*(S,\IZ)$.
 \item Create a data structure for elements in $A\hilb{n}$ and $A\{S_n\}$.
 \item Implement the multiplication in $A\{S_n\}$, \ie the map $m_{\pi,\tau}$ from Definition \ref{model}.
 \item Implement the symmetrisation $A\hilb{n} = A\{S_n\}^{S_n}$.
 \item Use the isomorphism from Theorem \ref{LSThm} to get the ring structure of $A\hilb{n}$.
 \item Write an element in $H^*(S\hilb{n},\IZ)$ as a linear combination of products of creation operators acting on the vacuum, using Theorem \ref{QinWangTheorem}.
\end{enumerate}
We now describe, where to find these steps in the code.
\begin{enumerate}
 \item The $\psi_{\rho\nu}$ are computed by the function \verb|monomialPower| in the module \verb|SymmetricFunctions.hs|, using the theory from \cite[Sect.~3.7]{Lascoux}. The idea is to use the scalar product on the space of symmetric functions, so that the power sums become orthogonal: $(p_\lambda,p_\mu) = z_\lambda \delta_{\lambda\mu}$. The values for $(p_\lambda,m_\mu)$ are given by \cite[Lemma~3.7.1]{Lascoux}, so we know how to get the matrix $\psi_{\nu\rho}^{-1}$. Since it is triangular with respect to some ordering of partitions, matrix inversion is easy.
 \item The ring structure of $H^*(S,\IZ)$ is stored in the module \verb|K3.hs|. The only nontrivial multiplications are the products of two elements in $H^2(S,\IZ)$, where the intersection matrix is composed by the matrices for the hyperbolic and the $E_8$ lattice. The cup product and the adjoint comultiplication from Definition~\ref{comult} are implemented by the methods \verb|cup| and \verb|cupAd|.
 \item The data structures for basis elements of $A\hilb{n}$ and $A\{S_n\}$ are given by \verb|AnBase| and \verb|SnBase| in the module \verb|HilbK3.hs|. Linear combinations of basis elements are always stored as lists of pairs, each pair consisting of a basis element and a scalar factor.
 \item The function $m_{\pi,\tau}$ from Definition~\ref{model} is computed by the method \verb|multSn|. It contains the following substeps: First, the orbits of $\left<\pi,\tau\right>$ are computed recursively by glueing together the orbits of $\pi$ if they have both non-emtpty intersection with an orbit of $\tau$. Second, the composition $\pi\tau$ is computed using a method from the external library \verb|Data.Permute|. Third, the functions $f^{\pi,\left<\pi,\tau\right>}$ and $f_{\left<\pi,\tau\right>,\pi\tau}$ using the (co--)products from \verb|K3.hs|.
 \item The symmetrisation morphism is implemented by \verb|toSn|. We don't konw a better way to do this than the naive approach which is summation over all elements in $S_n$.
 \item The multiplication in $A\hilb{n}$ is carried out by the method \verb|multAn|.
 \item The base change matrices between the canonical base of $A\hilb{n}$ and the base of $H^*(S\hilb{n},\IZ)$ are given by \verb|creaInt| and \verb|intCrea|. By composing \verb|multAn| with these matrices, one gets the desired multiplication in $H^*(S\hilb{n},\IZ)$, called \verb|cupInt|.
\end{enumerate}

\subsection{Module for cup product structure of K3 surfaces} 
Here the hyperbolic and the $E_8$ lattice and the bilinear form on the cohomology of a K3 surface are defined. Furthermore, cup products and their adjoints are implemented.
\begin{lstlisting}
-- a module for the integer cohomology structure of a K3 surface
module K3 (
	K3Domain,
	degK3,
	rangeK3,
	oneK3, xK3,
	cupLSparse,
	cupAdLSparse
	) where

import Data.Array
import Data.List
import Data.MemoTrie

-- type for indexing the cohomology base
type K3Domain = Int

rangeK3 = [0..23] :: [K3Domain]

oneK3 = 0 :: K3Domain
xK3 = 23 :: K3Domain

rangeK3Deg :: Int -> [K3Domain]
rangeK3Deg 0 = [0]
rangeK3Deg 2 = [1..22]
rangeK3Deg 4 = [23]
rangeK3Deg _ = []

delta i j = if i==j then 1 else 0

-- degree of the element of H^*(S), indexed by i
degK3 :: (Num d) => K3Domain -> d
degK3 0 = 0 
degK3 23 = 4
degK3 i = if i>0 && i < 23 then 2 else error "Not a K3 index"

-- the negative e8 intersection matrix
e8 = array ((1,1),(8,8)) $
	zip [(i,j) | i <- [1..8],j <-[1..8]] [
	-2, 1, 0, 0, 0, 0, 0, 0,
	1, -2, 1, 0, 0, 0, 0, 0,
	0, 1, -2, 1, 0, 0, 0, 0,
	0, 0, 1, -2, 1, 0, 0, 0,
	0, 0, 0, 1, -2, 1, 1, 0,
	0, 0, 0, 0, 1, -2, 0, 1,
	0, 0, 0, 0, 1, 0, -2, 0,
	0, 0, 0, 0, 0, 1, 0, -2 :: Int]

-- the inverse matrix of e8
inve8 = array ((1,1),(8,8)) $
	zip [(i,j) | i <- [1..8],j <-[1..8]] [
	-2, -3, -4, -5, -6, -4, -3, -2, 
	-3, -6, -8,-10,-12, -8, -6, -4,
	-4, -8,-12,-15,-18,-12, -9, -6, 
	-5,-10,-15,-20,-24,-16,-12, -8,
	-6,-12,-18,-24,-30,-20,-15,-10,
	-4, -8,-12,-16,-20,-14,-10, -7, 
	-3, -6, -9,-12,-15,-10, -8, -5, 
	-2, -4, -6, -8,-10, -7, -5, -4 :: Int]

-- hyperbolic lattice
u 1 2 = 1
u 2 1 = 1
u 1 1 = 0
u 2 2 = 0
u i j = undefined

-- cup product pairing for K3 cohomology
bilK3 :: K3Domain -> K3Domain -> Int
bilK3 ii jj = let 
	(i,j) = (min ii jj, max ii jj) 
	in
	if (i < 0) || (j > 23) then undefined else
	if (i == 0) then delta j 23 else
	if (i >= 1) && (j <= 2) then u i j else
	if (i >= 3) && (j <= 4) then u (i-2) (j-2) else
	if (i >= 5) && (j <= 6) then u (i-4) (j-4) else
	if (i >= 7) && (j <= 14) then e8 ! ((i-6), (j-6)) else
	if (i >= 15) && (j<= 22) then e8 ! ((i-14), (j-14))  else
	0

-- inverse matrix to cup product pairing
bilK3inv :: K3Domain -> K3Domain -> Int
bilK3inv ii jj = let 
	(i,j) = (min ii jj, max ii jj) 
	in
	if (i < 0) || (j > 23) then undefined else
	if (i == 0) then delta j 23 else
	if (i >= 1) && (j <= 2) then u i j else
	if (i >= 3) && (j <= 4) then u (i-2) (j-2) else
	if (i >= 5) && (j <= 6) then u (i-4) (j-4) else
	if (i >= 7) && (j <= 14) then inve8 ! ((i-6), (j-6)) else
	if (i >= 15) && (j<= 22) then inve8 ! ((i-14), (j-14))  else
	0 

-- cup product with two factors
-- a_i * a_j = sum [cup k (i,j) * a_k | k<- rangeK3]
cup :: K3Domain -> (K3Domain,K3Domain) -> Int
cup = memo2 r where
	r k (0,i) = delta k i
	r k (i,0) = delta k i
	r _ (i,23) = 0
	r _ (23,i) = 0
	r 23 (i,j) =  bilK3 i j
	r _ _ = 0

-- indices where the cup product does not vanish
cupNonZeros :: [ (K3Domain,(K3Domain,K3Domain)) ]
cupNonZeros = [ (k,(i,j)) | i<-rangeK3, j<-rangeK3, k<-rangeK3, cup k (i,j) /= 0]

-- cup product of a list of factors
cupLSparse :: [K3Domain] -> [(K3Domain,Int)]
cupLSparse = cu . filter (/=oneK3) where
	cu [] = [(oneK3,1)]; cu [i] = [(i,1)]
	cu [i,j] = [(k,z) | k<-rangeK3, let z = cup k (i,j), z/=0]
	cu _ = []

-- comultiplication, adjoint to the cup product
-- Del a_k = sum [cupAd (i,j) k * a_i `tensor` a_k | i<-rangeK3, j<-rangeK3]
cupAd :: (K3Domain,K3Domain) -> K3Domain -> Int
cupAd = memo2 ad where 
	ad (i,j) k = negate $ sum [bilK3inv i ii * bilK3inv j jj 
		* cup kk (ii,jj) * bilK3 kk k |(kk,(ii,jj)) <- cupNonZeros ]

-- n-fold comultiplication
cupAdLSparse :: Int -> K3Domain -> [([K3Domain],Int)]
cupAdLSparse = memo2 cals where
	cals 0 k = if k == xK3 then [([],1)] else []
	cals 1 k = [([k], 1)]
	cals 2 k = [([i,j],ca) | i<-rangeK3, j<-rangeK3, let ca = cupAd (i,j) k, ca /=0]
	cals n k = clean [(i:r,v*w) |([i,j],w)<-cupAdLSparse 2 k, (r,v)<-cupAdLSparse(n-1) j]
	clean = map (\g -> (fst$head g, sum$(map snd g))). groupBy cg.sortBy cs  
	cs = (.fst).compare.fst; cg = (.fst).(==).fst

\end{lstlisting}

\subsection{Module for handling partitions} 
This module defines the data structures and elementary methods to handle partitions. We define both partitions written as descending sequences of integers ($\lambda$-notation) and as sequences of multiplicities ($\alpha$-notation).
\begin{lstlisting}
{-# LANGUAGE TypeOperators, TypeFamilies #-}

-- implements data structure and basic functions for partitions
module Partitions where

import Data.Permute
import Data.Maybe
import qualified Data.List 
import Data.MemoTrie

class (Eq a, HasTrie a) => Partition a where
	-- length of a partition
	partLength :: Integral i => a -> i 
	
	-- weight of a partition
	partWeight :: Integral i => a -> i
	
	-- degree of a partition = weight - length
	partDegree :: Integral i => a -> i
	partDegree p = partWeight p - partLength p
	
	-- the z, occuring in all papers
	partZ :: Integral i => a -> i
	partZ = partZ.partAsAlpha
	
	-- conjugated partition
	partConj :: a -> a
	partConj = res. partAsAlpha where
		make l (m:r) = l : make (l-m) r
		make _ [] = []
		res (PartAlpha r) = partFromLambda $ PartLambda $ make (sum r) r
	
	-- empty partition
	partEmpty :: a
	
	-- transformation to alpha-notation
	partAsAlpha :: a -> PartitionAlpha
	-- transformation from alpha-notation
	partFromAlpha :: PartitionAlpha -> a
	-- transformation to lambda-notation
	partAsLambda :: a -> PartitionLambda Int
	-- transformation from lambda-notation
	partFromLambda :: (Integral i, HasTrie i) => PartitionLambda i -> a
	
	-- all permutationens of a certain cycle type
	partAllPerms :: a -> [Permute]
	
-----------------------------------------------------------------------------------------

-- data type for partitiones in alpha-notation
-- (list of multiplicities)
newtype PartitionAlpha = PartAlpha { alphList::[Int] }

-- reimplementation of the zipWith function
zipAlpha op (PartAlpha a) (PartAlpha b) = PartAlpha $ z a b where
	z (x:a) (y:b) = op x y : z a b
	z [] (y:b) = op 0 y : z [] b
	z (x:a) [] = op x 0 : z a []
	z [] [] = []

-- reimplementation of the (:) operator
alphaPrepend 0 (PartAlpha []) = partEmpty
alphaPrepend i (PartAlpha  r) = PartAlpha (i:r)

-- all partitions of a given weight
partOfWeight :: Int -> [PartitionAlpha]
partOfWeight = let
	build n 1 acc = [alphaPrepend n acc]
	build n c acc = concat [ build (n-i*c) (c-1) (alphaPrepend i acc) | i<-[0..div n c]] 
	a 0 = [PartAlpha []]
	a w =  if w<0 then [] else  build w w partEmpty
	in memo a

-- all partitions of given weight and length
partOfWeightLength = let
	build 0 0 _ = [partEmpty]
	build w 0 _ = []
	build w l c = if l > w || c>w then [] else
		concat [ map (alphaPrepend i) $ build (w-i*c) (l-i) (c+1) 
			| i <- [0..min l $ div w c]]
	a w l = if w<0 || l<0 then [] else build w l 1
	in memo2 a

-- determines the cycle type of a permutation
cycleType :: Permute -> PartitionAlpha
cycleType p = let 
	lengths = Data.List.sort $ map Data.List.length $ cycles p
	count i 0 [] = partEmpty
	count i m [] = PartAlpha [m]
	count i m (x:r) = if x==i then count i (m+1) r 
		else alphaPrepend m (count (i+1) 0 (x:r)) 
	in count 1 0 lengths

-- constructs a permutation from a partition
partPermute :: Partition a => a -> Permute
partPermute = let
	make l n acc (PartAlpha x) = f x where
		f [] = cyclesPermute n acc 
		f (0:r) = make (l+1) n acc $ PartAlpha r
		f (i:r) = make l (n+l) ([n..n+l-1]:acc) $ PartAlpha ((i-1):r)
	in make 1 0 [] . partAsAlpha

instance Partition PartitionAlpha where
	partWeight (PartAlpha r) = fromIntegral $ sum $ zipWith (*) r [1..]
	partLength (PartAlpha r) = fromIntegral $ sum r
	partEmpty = PartAlpha []
	partZ (PartAlpha l) = foldr (*) 1 $ 
		zipWith (\a i-> factorial a*i^a) (map fromIntegral l) [1..] where
			factorial n = if n==0 then 1 else n*factorial(n-1)
	partAsAlpha = id
	partFromAlpha = id
	partAsLambda (PartAlpha l) = PartLambda $ reverse $ f 1 l where
		f i [] = []
		f i (0:r) = f (i+1) r
		f i (m:r) = i : f i ((m-1):r)
	partFromLambda = lambdaToAlpha
	partAllPerms = partAllPerms . partAsLambda

instance Eq PartitionAlpha where
	PartAlpha p == PartAlpha q = findEq p q where
		findEq [] [] = True
		findEq (a:p) (b:q) = (a==b) && findEq p q
		findEq [] q = isZero q
		findEq p [] = isZero p 
		isZero = all (==0) 

instance Ord PartitionAlpha where
	compare a1 a2 = compare (partAsLambda a1) (partAsLambda a2)

instance Show PartitionAlpha where 
	show p = let
		leftBracket = "(|"  
		rightBracket = "|)" 
		rest [] = rightBracket
		rest [i] = show i ++ rightBracket
		rest (i:q) = show i ++ "," ++ rest q
		in leftBracket ++ rest (alphList p) 

instance HasTrie PartitionAlpha where
	newtype PartitionAlpha :->: a =  TrieType { unTrieType :: [Int] :->: a }
	trie f = TrieType $ trie $ f . PartAlpha
	untrie f =  untrie (unTrieType f) . alphList
	enumerate f  = map (\(a,b) -> (PartAlpha a,b)) $ enumerate (unTrieType f)

-----------------------------------------------------------------------------------------

-- data type for partitions in lambda-notation
-- (descending list of positive numbers)
newtype PartitionLambda i = PartLambda { lamList :: [i] }

lambdaToAlpha :: Integral i => PartitionLambda i -> PartitionAlpha
lambdaToAlpha (PartLambda []) = PartAlpha[] 
lambdaToAlpha (PartLambda (s:p)) = lta 1 s p [] where
	lta _ 0 _ a = PartAlpha a
	lta m c [] a = lta 0 (c-1) [] (m:a)
	lta m c (s:p) a = if c==s then lta (m+1) c p a else 
		lta 0 (c-1) (s:p) (m:a)

instance (Integral i, HasTrie i) => Partition (PartitionLambda i) where
	partWeight (PartLambda r) = fromIntegral $ sum r
	partLength (PartLambda r) = fromIntegral $ length r
	partEmpty = PartLambda []
	partAsAlpha = lambdaToAlpha
	partAsLambda (PartLambda r) = PartLambda $ map fromIntegral r
	partFromAlpha (PartAlpha l) = PartLambda $ reverse $ f 1 l where
		f i [] = []
		f i (0:r) = f (i+1) r
		f i (m:r) = i : f i ((m-1):r)
	partFromLambda (PartLambda r) = PartLambda $ map fromIntegral r
	partAllPerms (PartLambda l) = it $ Just $ permute $ partWeight $ PartLambda l where
		it (Just p) = if Data.List.sort (map length $ cycles p) == r 
			then p : it (next p) else it (next p)
		it Nothing = []
		r = map fromIntegral $ reverse l

instance (Eq i, Num i) => Eq (PartitionLambda i) where
	PartLambda p == PartLambda q = findEq p q where
		findEq [] [] = True
		findEq (a:p) (b:q) = (a==b) && findEq p q
		findEq [] q = isZero q
		findEq p [] = isZero p 
		isZero = all (==0) 

instance (Ord i, Num i) => Ord (PartitionLambda i) where
	compare p1 p2 = if weighteq == EQ then compare l1 l2 else weighteq where
		(PartLambda l1, PartLambda l2) = (p1, p2)
		weighteq = compare (sum l1) (sum l2)

instance (Show i) => Show (PartitionLambda i) where
	show (PartLambda p) = "[" ++ s ++ "]" where
		s = concat $ Data.List.intersperse "-" $ map show p

instance HasTrie i => HasTrie (PartitionLambda i) where
	newtype (PartitionLambda i) :->: a =  TrieTypeL { unTrieTypeL :: [i] :->: a }
	trie f = TrieTypeL $ trie $ f . PartLambda
	untrie f =  untrie (unTrieTypeL f) . lamList
	enumerate f  = map (\(a,b) -> (PartLambda a,b)) $ enumerate (unTrieTypeL f)
\end{lstlisting}
\subsection{Module for coefficients on Symmetric Functions} 
This module provides nothing but the base change matrices $\psi_{\lambda\mu}$ and $\psi^{-1}_{\mu\lambda}$ from Definition \ref{SymFun}.
\begin{lstlisting}
-- A module implementing base change matrices for symmetric functions
module SymmetricFunctions(
	monomialPower,
	powerMonomial,
	factorial
	) where

import Data.List 
import Data.MemoTrie
import Data.Ratio
import Partitions

-- binomial coefficients
choose n k = ch1 n k where
	ch1 = memo2 ch
	ch 0 0 = 1
	ch n k = if n<0 || k<0 then 0 else if k> div n 2 + 1 then ch1 n (n-k) else
		ch1(n-1) k + ch1 (n-1) (k-1)

-- multinomial coefficients
multinomial 0 [] = 1
multinomial n [] = 0
multinomial n (k:r) = choose n k * multinomial (n-k) r

-- factorial function
factorial 0 = 1
factorial n = n*factorial(n-1)

-- http://www.mat.univie.ac.at/~slc/wpapers/s68vortrag/ALCoursSf2.pdf , p. 48
-- scalar product between monomial symmetric functions and power sums
monomialScalarPower moI poI = (s * partZ poI) `div` quo where
	mI = partAsAlpha moI
	s = sum[a* moebius b | (a,b)<-finerPart mI (partAsLambda poI)]
	quo = product[factorial i| let PartAlpha l =mI, i<-l] 
	nUnder 0 [] = [[]]
	nUnder n [] = [] 
	nUnder n (r:profile) = concat[map (i:) $ nUnder (n-i) profile | i<-[0..min n r]]
	finerPart (PartAlpha a) (PartLambda l) = nub [(a`div` sym sb,sb) 
		| (a,b)<-fp 1 a l, let sb = sort b] where
		sym = s 0 []
		s n acc [] = factorial n
		s n acc (a:o) = if a==acc then s (n+1) acc o else factorial n * s 1 a o
		fp i [] l = if all (==0) l then [(1,[[]|x<-l])] else []
		fp i (0:ar) l = fp (i+1) ar l
		fp i (m:ar) l = [(v*multinomial m p,addprof p op) 
			| p <- nUnder m (map (flip div i) l), 
			(v,op) <- fp (i+1) ar (zipWith (\j mm -> j-mm*i) l p)] where
				addprof = zipWith (\mm l -> replicate mm i ++ l)
	moebius l = product [(-1)^c * factorial c | m<-l, let c = length m - 1]

-- base change matrix from monomials to power sums
-- no integer coefficients
-- m_j = sum [ p_i * powerMonomial i j | i<-partitions]
powerMonomial :: (Partition a, Partition b) => a->b->Ratio Int
powerMonomial poI moI = monomialScalarPower moI poI % partZ poI

-- base change matrix from power sums to monomials
-- p_j = sum [m_i * monomialPower i j | i<-partitions]
monomialPower :: (Partition a, Partition b, Num i) => a->b->i 
monomialPower lambda mu = fromIntegral $ numerator $ 
	memoizedMonomialPower (partAsLambda lambda) (partAsLambda mu)  
memoizedMonomialPower = memo2 mmp1 where
	mmp1 l m  = if partWeight l == partWeight m then mmp2 (partWeight m) l m else 0 
	mmp2 w l m = invertLowerDiag (map partAsLambda $ partOfWeight w) powerMonomial l m

-- inversion of lower triangular matrix
invertLowerDiag vs a = ild where
	ild = memo2 inv
	delta i j = if i==j then 1 else 0
	inv i j | i<j = 0
		| otherwise = (delta i j - sum [a i k * ild k j | k<-vs, i>k , k>= j]) / a i i
\end{lstlisting} 
\subsection{Module implementing cup products for Hilbert schemes} This is our main module. We implement the algebraic model developped by Lehn and Sorger and the change of base due to Qin and Wang. The cup product on the Hilbert scheme is computed by the function \texttt{cupInt}.
\begin{lstlisting}
-- implements the cup product according to Lehn-Sorger and Qin-Wang
module HilbK3 where

import Data.Array
import Data.MemoTrie
import Data.Permute hiding (sort,sortBy)
import Data.List
import qualified Data.IntMap as IntMap
import qualified Data.Set as Set
import Data.Ratio
import K3
import Partitions
import SymmetricFunctions

-- elements in A^[n] are indexed by partitions, with attached elements of the base K3
-- is also used for indexing H^*(Hilb, Z)
type AnBase = (PartitionLambda Int, [K3Domain])

-- elements in A{S_n} are indexed by permutations, in cycle notation,
-- where to each cycle an element of the base K3 is attached, see L-S (2.5)
type SnBase = [([Int],K3Domain)]

-- an equivalent to partZ with painted partitions
-- counts multiplicites that occur, when the symmetrization operator is applied
anZ :: AnBase -> Int
anZ (PartLambda l, k) = comp 1 (0,undefined) 0 $ zip l k where
	comp acc old m (e@(x,_):r) | e==old = comp (acc*x) old (m+1) r
		| otherwise = comp (acc*x*factorial m) e 1 r
	comp acc _ m [] = factorial m * acc

-- injection of A^[n] in A{S_n}, see L-S 2.8
-- returns a symmetrized vector of A{S_n}
toSn :: AnBase -> ([SnBase],Int)
toSn = makeSn where
	allPerms = memo p where 
		p n = map (array (0,n-1). zip [0..]) (permutations [0..n-1]) 
	shape l = (map (forth IntMap.!) l, IntMap.fromList $ zip [1..] sl) where
		sl = map head$ group $ sort l; 
		forth = IntMap.fromList$ zip sl [1..]
	symmetrize :: AnBase -> ([[([Int],K3Domain)]],Int)
	symmetrize (part,l) = (perms, toInt $ factorial n % length perms)  where 
		perms = nub [sortSn$ zipWith (\c cb ->(ordCycle $ map(p!)c, cb) ) cyc l 
			| p <- allPerms n]
		cyc = sortBy ((.length).flip compare.length) $ cycles $ partPermute part
		n = partWeight part
	ordCycle cyc = take l $ drop p $ cycle cyc where
		(m,p,l) = foldl findMax (-1,-1,0) cyc
		findMax (m,p,l) ce = if m<ce then (ce,l,l+1) else (m,p,l+1)
	sortSn = sortBy	compareSn  where
		compareSn (cyc1,class1) (cyc2,class2) = let
			cL = compare l2 $ length cyc1 ; l2 = length cyc2
			cC = compare class2 class1
			in if cL /= EQ then cL else 
				if cC /= EQ then cC else compare cyc2 cyc1  
	mSym = memo symmetrize
	makeSn (part,l) = ([ [(z,im IntMap.! k) | (z,k) <- op ]|op <- res],m)  where
		(repl,im) = shape l
		(res,m) = mSym (part,repl)

-- multiplication in A{S_n}k, see L-S, Prop 2.13
multSn :: SnBase -> SnBase -> [(SnBase,Int)]
multSn l1 l2 = tensor $ map m cmno where
	-- determines the orbits of the group generated by pi, tau
	commonOrbits :: Permute -> Permute -> [[Int]]
	commonOrbits pi tau = Data.List.sortBy ((.length).compare.length) orl where
		orl = foldr (uni [][]) (cycles pi) (cycles tau) 
		uni i ni c []  = i:ni
		uni i ni c (k:o) = if Data.List.intersect c k == [] 
			then uni i (k:ni) c o else uni (i++k) ni c o
	pi1 = cyclesPermute n $ cy1 ; cy1 = map fst l1; n = sum $ map length cy1
	pi2 = cyclesPermute n $ map fst l2
	set1 = map (\(a,b)->(Set.fromList a,b)) l1; 
	set2 = map (\(a,b)->(Set.fromList a,b)) l2
	compose s t = swapsPermute (max (size s) (size t)) (swaps s ++ swaps t)
	tau = compose pi1 pi2
	cyt = cycles tau ; 
	cmno = map Set.fromList $ commonOrbits pi1 pi2; 
	m or = fdown where
		sset12 = [xv | xv <-set1++set2, Set.isSubsetOf (fst xv) or]
		-- fup and fdown correspond to the images of the maps described in L-S (2.8)
		fup = cupLSparse $ map snd sset12 ++ replicate def xK3
		t = [c | c<-cyt, Set.isSubsetOf (Set.fromList c) or]
		fdown = [(zip t l,v*w*24^def)| (r,v) <- fup, (l,w)<-cupAdLSparse(length t) r] 
		def = toInt ((Set.size or + 2 - length sset12 - length t)%2)

-- tensor product for a list of arguments
tensor :: Num a =>  [[([b],a)]] -> [([b],a)]
tensor [] = [([],1)]
tensor (t:r) = [(y++x,w*v) |(x,v)<-tensor r, (y,w) <- t ]

-- multiplication in A^[n]
multAn :: AnBase -> AnBase -> [(AnBase,Int)]
multAn a = multb where
	(asl,m) = toSn a
	toAn sn =(PartLambda l, k) where 
		(l,k)= unzip$ sortBy (flip compare)$ map (\(c,k)->(length c,k)) sn
	multb (pb,lb) = map ungroup$ groupBy ((.fst).(==).fst) $sort elems where
		ungroup g@((an,_):_) = (an, m*(sum $ map snd g) )
		bs = zip (sortBy ((.length).flip compare.length) $cycles $ partPermute pb) lb
		elems = [(toAn cs,v) | as <- asl, (cs,v) <- multSn as bs]

-- integer base to ordinary base, see Q-W, Thm 1.1
intCrea :: AnBase -> [(AnBase,Ratio Int)]
intCrea = map makeAn. tensor. construct where
	memopM = memo pM
	pM pa = [(pl,v)| p@(PartLambda pl)<-map partAsLambda$ partOfWeight (partWeight pa), 
		let v = powerMonomial p pa, v/=0]
	construct pl = onePart pl : xPart pl : 
		[ [(zip l $ repeat a,v)| (l,v)<- memopM (subpart pl a)] |a<-[1..22]] 
	onePart pl = [(zip l$ repeat oneK3, 1%partZ p)] where 
		p@(PartLambda l) = subpart pl oneK3
	xPart pl = [(zip l$ repeat xK3, 1)] where 
		(PartLambda l) = subpart pl xK3
	makeAn (list,v) = ((PartLambda x,y),v) where 
		(x,y) = unzip$ sortBy (flip compare) list 

-- ordinary base to integer base, see Q-W, Thm 1.1
creaInt :: AnBase -> [(AnBase, Int)]
creaInt = map makeAn. tensor. construct where
	memomP = memo mP
	mP pa = [(pl,v)| p@(PartLambda pl)<-map partAsLambda$ partOfWeight (partWeight pa), 
		let v = monomialPower p pa, v/=0]
	construct pl = onePart pl : xPart pl : 
		[ [(zip l $ repeat a,v)| (l,v)<- memomP (subpart pl a)] |a<-[1..22]] 
	onePart pl = [(zip l$ repeat oneK3, partZ p)] where 
		p@(PartLambda l) = subpart pl oneK3
	xPart pl = [(zip l$ repeat xK3, 1)] where 
		(PartLambda l) = subpart pl xK3
	makeAn (list,v) = ((PartLambda x,y),v) where 
		(x,y) = unzip$ sortBy (flip compare) list 

-- cup product for integral classes
cupInt :: AnBase -> AnBase -> [(AnBase,Int)]
cupInt a b = [(s,toInt z)| (s,z) <- y] where
	ia = intCrea a; ib = intCrea b
	x = sparseNub [(e,v*w*fromIntegral z) | (p,v) <- ia, 
		let m = multAn p, (q,w) <- ib, (e,z)<- m q] 
	y = sparseNub [(s,v*fromIntegral w) | (e,v) <- x, (s,w) <- creaInt e]

-- helper function, adds duplicates in a sparse vector
sparseNub :: (Num a) => [(AnBase, a)] -> [(AnBase,a)] 
sparseNub = map (\g->(fst$head g, sum $map snd g)).groupBy ((.fst).(==).fst). 
	sortBy ((.fst).compare.fst)

-- cup product for integral classes from a list of factors
cupIntList :: [AnBase] -> [(AnBase,Int)]
cupIntList = makeInt. ci . cL where
	cL [b] = intCrea b
	cL (b:r) = x where
		ib = intCrea b
		x = sparseNub [(e,v*w*fromIntegral z) | 
			(p,v) <- cL r, let m = multAn p, (q,w) <- ib, (e,z)<-m q]
	makeInt l = [(e,toInt z) | (e,z) <- l]
	ci l = sparseNub [(s,v*fromIntegral w) | (e,v) <- l, (s,w) <- creaInt e]

-- degree of a base element of cohomology
degHilbK3 :: AnBase -> Int
degHilbK3 (lam,a) = 2*partDegree lam + sum [degK3 i | i<- a]

-- base elements in Hilb^n(K3) of degree d 
hilbBase :: Int -> Int -> [AnBase]
hilbBase = memo2 hb where
	hb n d = sort $map ((\(a,b)->(PartLambda a,b)).unzip) $ hilbOperators n d  

-- all possible combinations of creation operators of weight n and degree d
hilbOperators :: Int -> Int -> [[ (Int,K3Domain) ]]
hilbOperators = memo2 hb where 
	hb 0 0 = [[]] -- empty product of operators
	hb n d = if n<0 || odd d || d<0 then [] else 
		nub $ map (Data.List.sortBy (flip compare)) $ f n d
	f n d = [(nn,oneK3):x | nn <-[1..n], x<-hilbOperators(n-nn)(d-2*nn+2)] ++
		[(nn,a):x | nn<-[1..n], a <-[1..22], x<-hilbOperators(n-nn)(d-2*nn)] ++
		[(nn,xK3):x | nn <-[1..n], x<-hilbOperators(n-nn)(d-2*nn-2)] 

-- helper function
subpart :: AnBase -> K3Domain -> PartitionLambda Int
subpart (PartLambda pl,l) a = PartLambda $ sb pl l where
	sb [] _ = []
	sb pl [] = sb pl [0,0..]
	sb (e:pl) (la:l) = if la == a then e: sb pl l else sb pl l

-- converts from Rational to Int
toInt :: Ratio Int -> Int
toInt q = if n ==1 then z else error "not integral" where 
	(z,n) =(numerator q, denominator q)
\end{lstlisting}

\bibliographystyle{amsplain}

\begin{thebibliography}{10}


\bibitem{BNS}
S.~Boissi\`ere, M.~Nieper-Wi{\ss}kirchen and A.~Sarti, \emph{Smith theory and 
  Irreducible Holomorphic Symplectic Manifolds}, Journal of Topology \textbf{6} (2013), no.~2, 361–-390.

\bibitem{Denes}
J.~D\'enes, \emph{The representation of a permutation as the product of a minimal number of
  transpositions, and its connection with the theory of graphs}, Publ. Math. Institute Hung. Acad. Sci.
  \textbf{4} (1959), 63--71.

\bibitem{EGL}
G.~Ellingsrud, L.~G\"ottsche and M.~Lehn, \emph{On the Cobordism Class of the Hilbert 
  Scheme of a Surface}, Journal of Algebraic Geometry \textbf{10} (2001), 81--100. 

\bibitem{Fogarty}
J.~Fogarty, \emph{Algebraic Families on an Algebraic Surface},
  Am.~J.~Math.~\textbf{10} (1968), 511--521.

\bibitem{Lascoux}
A.~Lascoux, \emph{Symmetric functions}, Notes of the course given at Nankai University (2001),
  \href{http://www.mat.univie.ac.at/~slc/wpapers/s68vortrag/ALCoursSf2.pdf}{http://www.mat.univie.ac.at/\~{}slc/wpapers/s68vortrag/ALCoursSf2.pdf} .

\bibitem{LehnSorger}
M.~Lehn and C.~Sorger, \emph{The cup product of {H}ilbert schemes for {$K3$}
  surfaces}, Invent. Math. \textbf{152} (2003), no.~2, 305--329.

\bibitem{Markman}
E.~Markman, \emph{Integral generators for the cohomology ring of moduli spaces of
  sheaves over {P}oisson surfaces}, Adv. Math. \textbf{208} (2007), no.~2,
  622--646.

\bibitem{Markman2}
E.~Markman, \emph{Integral constraints on the monodromy group of the
  hyper{K}\"ahler resolution of a symmetric product of a {$K3$} surface},
  Internat. J. Math. \textbf{21} (2010), no.~2, 169--223.

\bibitem{milnor1973symmetric}
J.~Milnor and D.~Husem{\"o}ller, \emph{Symmetric bilinear forms}, 
  Ergebnisse der Mathematik und ihrer Grenzgebiete, Springer (1973).

\bibitem{Nakajima}
H.~Nakajima, \emph{Heisenberg algebra and Hilbert schemes of points on
  projective surfaces}, Ann. of Math. (2) \textbf{145} (1997), no.~2, 379--388.

\bibitem{QinWang}
Z.~Qin and W.~Wang, \emph{Integral operators and integral cohomology classes of
  Hilbert schemes}, Math. Ann. \textbf{331} (2005), no.~3, 669--692.

\bibitem{Verbitsky}
M.~Verbitsky, \emph{Cohomology of compact hyperk\"ahler manifolds and its
  applications}, Geom. Funct. Anal. \textbf{6} (1996), no.~4, 601--611.

\end{thebibliography}

\end{document}